\newtheorem{theorem}{Theorem}[section]
\newtheorem{prop}[theorem]{Proposition}
\newtheorem{lemma}[theorem]{Lemma}
\newtheorem*{caveat*}{Caveat}
\newtheorem*{definitions*}{Definitions}
\theoremstyle{definition}
\newcommand{\dsr}{\mathscr D}
\newcommand{\kO}{\mathfrak O}
\newcommand{\Om}{\Omega}
\newcommand{\om}{\omega}
\newcommand{\ol}{\overline}
\renewcommand{\t}{\tau}
\newcommand{\tm}{\overline t}
\def\NN{\mathbb N}
\newcommand{\wk}{~\rightharpoonup~}
\renewcommand{\leq}{\leqslant}
\renewcommand{\geq}{\geqslant}
\def\RR{\mathbb R}
\def\div{\mathop{\mathrm{div}}}
\newcommand{\vt}{\vartheta}
\newcommand{\sE}{\mathcal{E}}
\newcommand{\sU}{\mathcal U}
\newcommand{\half}{{\frac{1}{2}}}
\newcommand{\sR}{\mathcal{R}}
\newcommand{\x}{x_1}
\newcommand{\y}{x_2}
\newcommand{\sS}{\mathcal{S}}
\newcommand{\CC}{\mathbb{C}}
\newcommand{\sP}{\mathcal{P}}
\newcommand{\ul}{\underline}
\newcommand{\nc}{\newcommand}
\nc{\comment}[1]{} \nc{\myref}[1]{{\rm(\ref{#1})\comment{#1}}}
\nc{\RLp}{\mbox{$L_{p}(\Gamma)\;$}}
\nc{\RLh}{\mbox{$L_{2}(\Gamma)\;$}} \nc{\sL}{\mathcal L}
\nc{\RLpr}{\mbox{$L_{p}(\Gamma,\rho)\;$}}
\nc{\RLhr}{\mbox{$L_{2}(\Gamma,\rho)\;$}} \nc{\ds}{\displaystyle}
\def\sin{\mathop{\rm sin}\nolimits}
\def\cos{\mathop{\rm cos}\nolimits}
\def\inf{\mathop{\rm inf}\nolimits}
\def\sup{\mathop{\rm sup}\nolimits}
\nc{\bsl}{\backslash} \nc{\T}{\Theta} \nc{\W}{|W^*|}
\nc{\la}{\label} \nc{\sB}{\mathcal B}
\nc{\mS}{\mathscr S}
\newcommand{\beq}{\begin{equation}}
\newcommand{\eeq}{\end{equation}}
\numberwithin{equation}{section}
\begin{document}

\parskip=6pt 
\parindent=0pt 
\title{On the stability of travelling waves with vorticity obtained  
by minimization} 
\author{}
\author{B. Buffoni
\footnote{Supported by a grant of the Swiss National Science Foundation.}\\ 
{\small Section de math\'ematiques}\\  
{\small \'Ecole Polytechnique F\'ed\'erale}\\  
\vspace{4mm}
{\small Lausanne, CH-1015}\\
G. R. Burton\\ 
{\small Department of Mathematical Sciences}\\  
{\small University of Bath}\\  
{\small Bath BA27 AY, UK}
} 
\date{
Revised January 2013 (earlier versions: September 2011 and July 2012)\\
\hspace{2cm}\\
{\em NoDEA Nonlinear Differential Equations and Applications}, to appear\\
The final publication will be available at http://link.springer.com\\
doi 10.1007/s00030-013-0223-4 }

\maketitle 

\begin{abstract}
We modify the approach of Burton and Toland \cite{BuTo}
to show the existence of periodic surface water waves with vorticity
in order that it becomes
suited to a stability analysis. This is achieved by enlarging
the function space to a class of stream functions that
do not correspond necessarily to travelling profiles. In particular,
for smooth profiles
and smooth 
stream functions, the normal component of the velocity field
at the free boundary is not required a priori to vanish in some
Galilean coordinate system. Travelling periodic waves are obtained by
a direct minimization of
a functional that corresponds to the total energy and that is therefore
preserved by the time-dependent evolutionary problem (this 
minimization appears in \cite{BuTo} after a first maximization).
In addition, we not only
use the circulation along the upper boundary as a constraint, but also the
total horizontal impulse (the velocity becoming a Lagrange multiplier).
This allows us to preclude parallel flows 
by choosing appropriately the values of these two constraints and the
sign of the vorticity.
By stability, we mean conditional energetic stability of the set
of minimizers as a whole,
the perturbations being spatially periodic of given period.
Our proofs depend on the assumption that the surface offers some resistance
to stretching and bending.
\end{abstract}

\section{Introduction} \label{the problem}

For a fixed H\"{o}lder exponent $\gamma\in(0,1)$, 
period $P>0$ and average height $Q>0$,
we shall consider domains $\mathit \Om\subset \RR^2$ and curves
$\mathscr S$ such that there
exists a $C^{1,\gamma}$-map $F:\RR^2\rightarrow \RR^2$ 
satisfying the following
properties:
\begin{itemize}
\item $F$ restricted to $\RR\times [0,Q]$ is a diffeomorphism
from $\RR\times [0,Q]$ onto $\overline{\mathit \Om}$,
\item meas$(\mathit \Om\cap((0,P)\times \RR))=PQ$,
\item $F(x_1,0)=(x_1,0)$ for all $x_1\in\RR$,
\item $\mathscr S\subset \RR\times(0,\infty)$ and $F$ 
 restricted to $\RR\times\{Q\}$ is a homeomorphism from
$\RR\times \{Q\}$ onto $\mathscr S$,
\item $F(x_1+P,x_2)=(F_1(x_1+P,x_2),F_2(x_1+P,x_2))
=(F_1(x_1,x_2)+P,F_2(x_1,x_2))$ for all $x=(x_1,x_2)\in \RR\times[0,Q]$.
\end{itemize}
As a consequence the
curve $ \mathscr S$ is of class $C^{1,\gamma}$ in the  open upper half plane, 
$P$-periodic and 
is a connected component of the boundary of
the region  $\mathit \Om$ . 
Let $\sS$ and $\Om$  denote one period of $\mathscr S$ and $\mathit \Om$.
We denote by $\kO$ the set of all domains $\Om$ defined in this way, and we
write $\Om\in \kO$ or $\mathit\Om\in \kO$.
Thus $\mathscr S$ must be a simple curve (without self-intersection or self-touching)
but it need not be the graph of a function.
While this is fairly general, it excludes some cases of physical interest.
For example, 
a row of rolling beads of mercury constitutes a travelling wave with
a disconnected free surface whose components are not graphs of functions,
and beads of mercury can touch without coalescing.

If $\RR^2$ is identified with the complex plane $\CC$, the point
$(x_1,x_2)$ corresponding to the complex number $x_1+ix_2$ ,
it can be shown (see e.g. the appendix A of the paper by Constantin and 
Varvaruca \cite{CoVa}) that there exists a holomorphic  map 
\begin{equation}
\label{eq: widetilde psi}
\widetilde \phi+i\widetilde \psi:\Om\rightarrow \RR\times (0,1)
\end{equation}
such that
\begin{itemize}
\item $\widetilde \phi+i\widetilde \psi$ can be extended into a diffeomorphism
from $\overline{\mathit \Om}$ onto $\RR\times [0,1]$,
\item $\widetilde \psi,\widetilde \phi$ are real-valued functions 
of class $C^{1,\gamma}$
on $\overline{\mathit \Om}$ and their gradients never vanish on 
$\overline \Om$,  
\item   
$\widetilde \psi|_{\{x_2=0\}}=0$ and $\widetilde \psi|_{\mathscr S}=1$,
\item $\widetilde \phi(x+P)+i\widetilde \psi (x+P)
=\widetilde \phi(x)+i \widetilde \psi(x)+\widetilde P$ 
for all $x=x_1+i x_2\in \RR\times[0,1]$, where

\begin{equation}
\label{eq: widetilde P}
\widetilde P=\int_0^P\partial_1\widetilde \phi(x_1,0)dx_1
=\int_0^P\partial_2\widetilde \psi(x_1,0)dx_1
=\int_{\sS}\nabla\widetilde \psi\cdot n\, dS
\end{equation}
and $n$ is the outward normal to $\Om$ at a point of $\sS$.
\end{itemize}
We shall write $\xi\in H^{1/2}_{per}(\sS)$ or
$\xi\in H^{1/2}_{per}(\mathscr S)$ if 
$\xi$ is the trace 
on $\mathscr S$
of some $\psi_\xi\in H^1_{loc}(\mathit \Om)$ that is $P$-periodic in $x_1$.
Analogously, we shall write $\zeta\in L^2_{per}(\mathit \Om)$ if  
$\zeta\in L^2_{loc}(\mathit \Om)$   is $P$-periodic 
in $x_1$.

 Given $\mathit \Om$, $\mathscr S$, $\xi\in H^{1/2}_{per}(\mathscr S)$ 
and $\zeta\in L^2_{per}(\mathit \Om)$,
let $\psi\in H_{\rm loc}^{1}(\mathit \Om)$ be the weak solution
of the boundary value problem
\begin{subequations}\label{fbvp}\begin{align}\label{2.1a}
&-\Delta \psi = \zeta \text{ on } \mathit \Om,\\
&\psi(x_1,0) = 0,\label{bottom bc}\\
&\psi=\xi \text{ on }\mathscr S,\label{const.}\\
&\psi \text{ is ${P}$-periodic in $\x$, written 
$\psi\in H^1_{per}(\Om)$ or $\psi\in H^1_{per}(\mathit \Om)$.}
\label{eq: H1per}
\end{align}

On one period, the circulation $C$
and the total horizontal impulse $I$ 
are given by
\begin{equation*}
C=C(\Omega,\xi,\zeta):=\int_{\sS} \nabla \psi\cdot n \, dS,
\end{equation*}
\begin{equation*}
I=I(\Omega,\xi,\zeta):=\int_{\Om} \partial_2 \psi dx
=\int_{\Om}\nabla x_2 \cdot \nabla  \psi dx.
\end{equation*}
By $C(\Omega,\xi,\zeta)=\int_{\sS} \nabla \psi\cdot n \, dS$, we mean
$$C(\Omega,\xi,\zeta)=\int_{\Om} \nabla \psi\cdot \nabla \widehat \psi\,dx
-\int_{\Om} \zeta \widehat \psi\,dx,$$
where $\widehat\psi$ is any function in $H^1_{per}(\Om)$ such that
$\widehat \psi|_{\{x_2=0\}}=0$ and 
$\widehat \psi|_{\sS}=1$. For example we can choose 
$\widehat\psi=\widetilde\psi$.
When $\psi$ is regular enough, these
two ways of defining $C(\Om,\xi,\zeta)$ agree, but the latter one requires
less regularity. We can also write, if there is enough regularity available,
$$I(\Omega,\xi,\zeta)
=\int_{\sS} x_2\nabla \psi\cdot n  \,dS +\int_{\Om} x_2\zeta\, dx.$$

Let us fix $\mu$ and $\nu$ in $\RR$.
Then 
$(\Omega,\xi,\zeta)$
defines a travelling water wave with stream function $\psi$, 
circulation $\mu$, total horizontal impulse $\nu$
and vorticity $\zeta$,   if, in addition,
\begin{equation}
C(\Om,\xi,\zeta)=\mu,~~I(\Om,\xi,\zeta)=\nu,\label{circ}
\end{equation}
\begin{equation}
\label{special xi}
\xi=\lambda_1 x_2+\lambda_2 |_{\mathscr S} \text{ for some }\lambda_1,\lambda_2\in\RR,
\end{equation}
\begin{equation}
\zeta = \lambda \circ (\psi-\lambda_1 x_2) 
\text{ almost everywhere for some function $\lambda$}
\label{vorticity}\\
\end{equation}
and
\begin{equation}
\half | \nabla \psi -(0,\lambda_1) |^2  +g\,x_2
=
\text{  constant on $\mathscr S$,}
\label{bern}
\end{equation}
where $g$ is gravity. The travelling wave is moving with speed $\lambda_1$ to the right and
 equation \eqref{vorticity} reflects the fact that vorticity in steady flows is constant on streamlines.
The constants $\lambda_1,\lambda_2$ in \eqref{special xi} and the function 
$\lambda$ in \eqref{vorticity}  
 are not prescribed.

If the surface reacts to
stretching and bending, the Bernoulli condition
\eqref{bern} is replaced by
 \begin{multline}
 \label{bern''}\half | \nabla \psi -(0,\lambda_1) |^2
  +g\,x_2  - T \beta \big(\ell(\sS)-P\big)^{\beta -1}\sigma\\
 + E\Big(2\sigma '' +\sigma^3\Big) =
\text{  constant on $\mathscr S$,}\tag{\ref{bern}$'$}
 \end{multline}
 where $'$ denotes differentiation with respect to arc length along the surface,
$\sigma(x)$ is the curvature of the surface at $x\in \mathscr S$,
 $\ell(\sS)$ is the length of $\sS$,  $E\geq 0$ is a coefficient of bending resistance and $\beta \geq 1$.
See \cite{Tol-hydro}.
The case $E=0$ and $\beta=1$ corresponds to simple surface tension
with coefficient $T$.
\end{subequations}

The total energy $\sL(\Om,\xi,\zeta)$ of  a solution of 
\eqref{fbvp}(a--d) in one period is the
sum of the kinetic energy,
the gravitational potential energy
and the surface energy: 
\begin{equation}\label{eq: definition of sL}
\sL (\Om, \xi,\zeta):=\frac 1 2  \int_\Om
|\nabla \psi|^2\,  d x + g \int_\Om x_2 ~d x +\sE(\sS),
\end{equation}
where $\psi$ is the solution to the corresponding boundary
value problem \eqref{fbvp}(a--d),
\begin{equation}\label{eq: definition of sE}
\sE(\sS)=
T (\ell(\sS)-P)^\beta +E
 \int_{0}^{\ell(\sS)}|\sigma|^2 ds,
\end{equation}
and $s$ is the arc length.
\footnote{If $p$ is a parametrisation of $\mathscr S$
 such that $|\frac d {dx}p|$
is constant and $p(x+P)=p(x)+(P,0)$, then
 $$\int_{0}^{\ell(\sS)}|\sigma|^2 ds=
\left(\frac{P}{\ell(\sS)}\right)^3
 \int_{0}^{P}\left|\frac{d^2}{ dx^2}p(x)\right|^2 dx.$$
In \cite{BuTo}, the power $3$ is wrongly omitted in several places, without invalidating the main results.}
Hence we are lead to the minimization problem
$$
\min \{\sL (\Om,\xi,\zeta):
\Om\in \kO,\, \xi \in H^{1/2}_{per}(\sS),\zeta \in \sR(\Om),
 C=\mu, I=\nu\},
$$
where $\kO$ is the class of domains $\Om$ described above and
$\sR(\Om)\subset L^2(\Om)$  is the set of rearrangements supported in $\Om$ of a given function 
$\zeta_Q\in L^2(\Om_Q)$, where
$\Om_Q=(0,P)\times (0,Q)$. 
Note that $\Om\neq \Om_Q$ is allowed and $\zeta_Q$ 
does not depend on $\Om$.
However, in general,  $\sR(\Om)$ is not weakly closed in $L^2(\Om)$
and we shall work instead with its weak closure
$\ol{\sR(\Om)}^w$ in 
$L^2(\Om)$, which is a convex subset of $L^2(\Om)$; see the discussion in
\cite[p. 979, 3rd parag.]{BuTo}.
Hence, as in \cite{BuTo}, we shall rather consider
\begin{equation}\label{min}
\min \{\sL (\Om,\xi,\zeta):
\Om\in \kO,\, \xi \in H^{1/2}_{per}(\sS),\zeta \in \ol{\sR(\Om)}^w,
 C=\mu, I=\nu\}.
\end{equation}

Observe that $\mathit \Om_Q := \RR \times (0,Q)\in \kO$.
We  write $\Om \in \kO$ or $\mathit \Om \in \kO$, and we assume that
$\sL(\Om,\xi,\zeta)=+\infty$ is allowed, for example if 
the surface energy is infinite because the boundary is not regular enough. 
We assume $T>0$, $\beta \geq 1$ and $E>0$  in order to obtain compactness
for the above minimization problem.

In \eqref{min},
the boundary condition \eqref{special xi} is not prescribed, but we will
show that it holds for minimizers.
Hence, in \eqref{min}, any stream function $\psi$
that is compatible with the vorticity function $\zeta$ is allowed (by
choosing $\xi=\psi|_{\mathscr S}$).
This feature will be crucial in the stability analysis of section 
\ref{section: stability}.

{\bf A way of avoiding parallel flows.}
When $\Om=\Om_Q$,
by taking $\widehat\psi=x_2/Q$ we get
$$I(\Om,\xi,\zeta)=Q\int_{\Om}\nabla( x_2/Q)\cdot \nabla \psi dx=
QC(\Om,\xi,\zeta)+\int_\Om x_2\zeta \, dx.$$
Hence, if $\zeta_Q$ is essentially one-signed and not trivial,
then $I(\Om_Q,\xi,\zeta)-QC(\Om_Q,\xi,\zeta)$ $\neq 0$ has the same sign as $\zeta_Q$. 
Thus,
to avoid parallel flows, it seems natural to choose
$\mu,\nu$ so that  $(\nu-Q\mu)\zeta_Q\leq 0$ a.e.
(or $\nu-Q\mu\neq 0$ if $\zeta_Q$ vanishes a.e.).

In \cite{BuTo}, parallel flows were precluded by choosing $\mu$ large enough.
They were proved to be saddle points of the energy, and thus different
from any minimizer (there, the energy functional was obtained 
after a first maximization).
For  related works on global minimization in hydrodynamical problems
and stability,
see \cite{CoSaSt,CoSt,Bu,grbarma,ChSeZa}.  In particular, the paper
\cite{CoSaSt} by Constantin, Sattinger and Strauss contains two variational 
formulations for gravity water waves with vorticity. In their first
formulation, instead of considering the constraint $\zeta\in 
\ol{\sR(\Om)}^w$ for a given 
$\zeta_Q\in L^2(\Om_Q)$ (among other constraints), 
they subtract
from the energy functional 
a term of the form $\int_{\Om} F(\zeta)dx$, where
$F:\RR\rightarrow \RR$ is a given $C^2$-function such that $F''$
never vanishes. As a result, for any critical point, $(F')^{-1}$
turns out to be the so-called vorticity function.
They do not apply their approach to existence results, but it
leads to  an elegant linear stability analysis
in \cite{CoSt}.

{\bf Overview of the paper.}
 Section 2 discusses the solution by minimization of
the elliptic equation $-\Delta\psi=\zeta$ for fixed $\Omega$, $\zeta$, $\mu$
and $\nu$ and establishes the unknown boundary data $\xi$.
In Section 3 it is shown that that the Bernoulli boundary condition is 
satisfied by constrained 
minimizers when $\Omega$ is allowed to vary.
Section 4 proves compactness of minimizing sequences and 
establishes the existence of constrained minimizers.
The main stability result is Theorem 5.2 which is proved using compactness of 
minimizing sequences together with some theory of transport equations
summarised in the Appendix.

{\bf Some open questions.}
\newline
-- Is there a criterion that ensures uniqueness of the constrained minimizer
(up to translational invariance)?
In such a case, the present notion of stability would be related
to ``orbital'' stability.
\newline
-- If $\zeta_Q$ is smooth, what can be said about
the regularity of the minimizers?
\newline
-- Is there an explicit $\zeta_Q$ for which the free boundaries
of the minimizers are not graphs?
\newline
-- For an initial profile near the one of a minimizer, is 
the solution to the evolutionary problem defined for  small enough 
positive times?
A stability result like Theorem 5.2 stated under this assumption 
is qualified as 
``conditional'' (see \cite{Mielke} and, for  well-posedness
issues for related settings, see e.g. \cite{CoSc}).
We therefore raise the question
whether such a solution to the evolutionary problem is defined for all 
positive times.

\section {Minimization on fixed domain}
We begin with a useful lemma.
\begin{lemma}\label{lemma: on lambda_i}
Suppose that $\Om\in \kO\backslash \{\Om_Q\}$ and $\zeta\in L^2(\Om)$. 
Then 
$$C(\Om,1,0)=\int_{\Om}|\nabla \widetilde \psi|^2dx>P/Q$$
(see \eqref{eq: widetilde psi} for the definition of 
$\widetilde \psi$) and,
for all $\mu,\nu\in\RR$, there exist $\lambda_1=\lambda_{1,\Om,\zeta}$ and 
$\lambda_2=\lambda_{2,\Om,\zeta}$ such that
$$C(\Om, \lambda_1x_2+\lambda_2,\zeta)=\mu,~~
I(\Om, \lambda_1x_2+\lambda_2,\zeta)=\nu.~~$$
Moreover $\lambda_1,\lambda_2\in\RR$ are unique.
\end{lemma}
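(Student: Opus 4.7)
My plan is to dispatch the three assertions in order: the identity $C(\Om,1,0)=\int_\Om|\nabla\widetilde\psi|^2\,dx$, the strict inequality $C(\Om,1,0)>P/Q$, and the existence and uniqueness of $\lambda_1,\lambda_2$. The main obstacle is the strict geometric lower bound, to which I would apply the coarea formula; the rest is essentially linear algebra on a $2\times 2$ system whose non-singularity is recovered from the same Cauchy--Schwarz computation.

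For the identity, $\widetilde\psi$ is harmonic (imaginary part of a holomorphic map) and satisfies exactly the boundary and periodicity conditions of \eqref{fbvp} with $\xi=1$, $\zeta=0$, so the unique weak solution is $\psi=\widetilde\psi$. Inserting $\widehat\psi=\widetilde\psi$ in the defining formula of $C$ then gives $C(\Om,1,0)=\int_\Om|\nabla\widetilde\psi|^2\,dx$. For the bound $\widetilde P:=\int_\Om|\nabla\widetilde\psi|^2\,dx>P/Q$, I would use coarea. Since $\nabla\widetilde\psi$ does not vanish on $\overline\Om$, for each $s\in(0,1)$ the level set $\Gamma_s=\{\widetilde\psi=s\}\cap\Om$ is, by the conformal description of $\widetilde\psi$, a single smooth simple curve joining matching points on the two vertical periodic sides of one period, so its length $L(s)$ satisfies $L(s)\geq P$, with equality only for a horizontal segment. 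Harmonicity and the divergence theorem applied to $\Om\cap\{\widetilde\psi<s\}$ give the conserved flux $\int_{\Gamma_s}|\nabla\widetilde\psi|\,dS=\widetilde P$. Combining Cauchy--Schwarz on $\Gamma_s$ with coarea,
\begin{equation*}
PQ=\int_\Om dx=\int_0^1\int_{\Gamma_s}|\nabla\widetilde\psi|^{-1}\,dS\,ds\geq\int_0^1\frac{L(s)^2}{\widetilde P}\,ds\geq\frac{P^2}{\widetilde P},
\end{equation*}
so $\widetilde P\geq P/Q$; equality would force $L(s)=P$ and $|\nabla\widetilde\psi|$ constant on a.e.\ level curve, hence $\widetilde\psi=x_2/Q$ and $\Om=\Om_Q$, which is excluded.

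For existence and uniqueness of $(\lambda_1,\lambda_2)$, the map $\xi\mapsto\psi_\xi$ is linear (and $\psi$ is affine in $\zeta$), so with $\xi=\lambda_1 x_2+\lambda_2$ the constraints $C=\mu$, $I=\nu$ reduce to an affine $2\times 2$ system with coefficient matrix
\begin{equation*}
M=\begin{pmatrix}C(\Om,x_2,0)&C(\Om,1,0)\\I(\Om,x_2,0)&I(\Om,1,0)\end{pmatrix}.
\end{equation*}
Since $\psi=x_2$ itself solves \eqref{fbvp} with $\xi=x_2$, $\zeta=0$, one has $I(\Om,x_2,0)=PQ$, and using $\widehat\psi=\widetilde\psi$ in the definition of $C$ gives $C(\Om,x_2,0)=\int_\Om\partial_2\widetilde\psi\,dx=I(\Om,1,0)$. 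Writing $I_1$ for this common value, the determinant of $M$ is $I_1^2-PQ\,\widetilde P$, and two successive Cauchy--Schwarz inequalities yield
\begin{equation*}
I_1^2\leq PQ\int_\Om(\partial_2\widetilde\psi)^2\,dx\leq PQ\int_\Om|\nabla\widetilde\psi|^2\,dx=PQ\,\widetilde P,
\end{equation*}
with equality only if $\partial_2\widetilde\psi$ is constant and $\partial_1\widetilde\psi\equiv 0$, i.e.\ $\widetilde\psi=x_2/Q$. Since $\Om\neq\Om_Q$, the determinant is strictly negative, $M$ is invertible, and $(\lambda_1,\lambda_2)$ is uniquely determined.
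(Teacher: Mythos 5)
Your proof is correct, and it reaches the crucial inequality $C(\Om,1,0)>P/Q$ by a genuinely different route. The paper transplants the function $x_2$ to the reference strip $(0,\widetilde P)\times(0,1)$ via the conformal map $(\widetilde\phi,\widetilde\psi)$, notes that the transplanted function has Dirichlet integral $PQ$ and boundary flux $P$, and then minimizes the Dirichlet integral in the strip subject to that flux constraint, the minimum being $P^2/\widetilde P$. You instead run the length--area method directly in the physical domain: coarea plus Cauchy--Schwarz along the level curves of $\widetilde\psi$, using the conserved flux $\int_{\Gamma_s}|\nabla\widetilde\psi|\,dS=\widetilde P$ and $L(s)\geq P$. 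These are dual forms of the same extremal-length computation; yours avoids the harmonic conjugate but needs the coarea formula and the level-set geometry (both available here, since $|\nabla\widetilde\psi|$ is bounded away from zero on the compact set $\overline\Om$ and the level sets are single $C^1$ arcs crossing one period). For the $2\times2$ system you also diverge slightly: the paper computes all four coefficients explicitly, getting $C(\Om,x_2,0)=I(\Om,1,0)=P$ and determinant $P^2-PQ\,\widetilde P$, whereas you keep $I_1:=C(\Om,x_2,0)=I(\Om,1,0)$ symbolic and obtain invertibility from the Gram-type estimate $I_1^2<PQ\,\widetilde P$. This is fine, but note it makes your coarea argument logically redundant: the one-line divergence-theorem identity $I(\Om,1,0)=P$ (which the paper proves) combined with your Cauchy--Schwarz step $I_1^2\leq PQ\,\widetilde P$, with equality only for $\widetilde\psi=x_2/Q$, already yields $\widetilde P>P/Q$ for $\Om\neq\Om_Q$ --- arguably the shortest proof of all three.
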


\begin{proof}
We require
\begin{eqnarray*}
\mu=C(\Om,\lambda_1 x_2+\lambda_2,\zeta)=\lambda_2 C(\Om,1,0)
+\lambda_1 C(\Om,x_2,0)+C(\Om,0,\zeta)
\\=\lambda_2 C(\Om,1,0)+\lambda_1 P+C(\Om,0,\zeta)
,\\
\nu=I(\Om,\lambda_1x_2+\lambda_2,\zeta)=\lambda_2 I(\Om,1,0)
+\lambda_1 I(\Om,x_2,0)+I(\Om,0,\zeta)
\\=\lambda_2 P+\lambda_1 PQ,\\
\end{eqnarray*}
because
$$C(\Om,x_2,0)
=\int_{\sS}\nabla x_2 \cdot n \, dS
=\int_{\Om}\text{div}(\nabla x_2) dx
+\int_{0}^P\partial_2 x_2\, dx_1
=\int_{0}^P\partial_2 x_2\, dx_1=P,$$
$$I(\Om,x_2,0)=\int_{\Om}\partial_2 x_2 dx=PQ,$$
\begin{eqnarray*}
&&I(\Om,1,0)=\int_\Om \nabla x_2\cdot\nabla \widetilde \psi\, dx
=\int_\Om \div(\widetilde \psi\nabla x_2) dx
=\int_\Om \div\Big((\widetilde \psi-1)\nabla x_2\Big) dx
\\&&=\int_{\partial \Om} (\widetilde \psi-1)\nabla x_2\cdot n\,  dS
=\int_{0}^P\partial_2 x_2\, dx_1=P
\end{eqnarray*}
and
$$I(\Om,0,\zeta)=\int_\Om \nabla x_2\cdot\nabla \psi\, dx
=\int_\Om \div(\psi\nabla x_2) dx
=\int_{\partial \Om} \psi\nabla x_2\cdot n\,  dS=0,$$
where $\psi$ is the solution to the system 
\eqref{2.1a} to \eqref{eq: H1per}
with $\xi=0$.

Let $\widetilde \psi$ be, as in
\eqref{eq: widetilde psi}, the harmonic function on $\mathit \Om$ that
vanishes on $\{x_2=0\}$, is $1$ on $\mathscr S$ and is $P$-periodic
in $x_1$. Then, by \eqref{eq: widetilde P}, $\widetilde P
=C(\Om,1,0)=\int_\Om |\nabla \widetilde\psi|^2dx$. Let us check that
\begin{equation}\label{eq: non-degeneracy}
C(\Om,1,0)\geq P/Q
\text{ with equality exactly when } \Om=\Om_Q.
\end{equation}
In order to do this, consider as in
\eqref{eq: widetilde psi}
 the harmonic conjugate
$\widetilde \phi$ of $\widetilde \psi$, that is,
 $\nabla \widetilde \phi$
is obtained from $\nabla \widetilde \psi$ by a clockwise rotation
through $\pi/2$. 
Then $\widetilde \phi(x+P)-\widetilde \phi(x)$ is a constant
equal to $\widetilde P=C(\Om,1,0)$ (see above) and
the map $(\widetilde \phi,\widetilde \psi)$ is a diffeomorphism
from $\mathit \Om$ to $\RR\times (0,1)$.

We denote by $(u,v)$ the 
Euclidean coordinates in $\RR\times (0,1)$ and by $(u,v)\rightarrow x_2(u,v)$
the map that associates
with $(u,v)$ the $x_2$ coordinate of
the corresponding point in $\mathit\Omega$.
Observe that
\begin{equation}
\partial_{u,v}x_2
=(\partial_u x_2,\partial_v x_2)
= \partial_{x_1,x_2}x_2\,   (\partial(x_1,x_2)/\partial(u,v)) $$
(Jacobian matrix),
$$\partial_{u,v}v=  \partial_{x_1,x_2}\tilde \psi\,   (\partial(x_1,x_2)/\partial(u,v)) ~, $$
 $$(\partial(x_1,x_2)/\partial(u,v))  (\partial(x_1,x_2)/\partial(u,v))^T
= \{ det  (\partial(x_1,x_2)/\partial(u,v)) \} I
\label{chvar}
\end{equation}
(multiple of the identity matrix; this is a consequence of the Cauchy-Riemann equations)
and thus
$$   \partial_{u,v}x_2  \cdot  \partial_{u,v}x_2 
=   \partial_{x_1,x_2}x_2  \cdot   \partial_{x_1,x_2}x_2\,     det  (\partial(x_1,x_2)/\partial(u,v)) 
=    det  (\partial(x_1,x_2)/\partial(u,v)) $$
and
$$   \partial_{u,v}x_2  \cdot  \partial_{u,v}v 
=   \partial_{x_1,x_2}x_2  \cdot   \partial_{x_1,x_2}\tilde \psi \,    det  (\partial(x_1,x_2)/\partial(u,v)) $$

As a consequence, we get that 
$$\int_{u=0}^{\widetilde P}\int_{v=0}^1|\nabla x_2(u,v)|^2dudv=\int_{\Om}dx=PQ$$
and
\begin{multline*}
\int_0^{\widetilde P} \partial_2 x_2 (u,1)du\stackrel{\text{Gauss}}{=}
\int_0^{\widetilde P}\int_0^1\div(v\nabla x_2(u,v)) dudv
\\=\int_0^{\widetilde P}\int_0^1\nabla x_2(u,v)\cdot \nabla v \, dudv
=\int_\Om \partial_{x_1,x_2} x_2\cdot 
\partial_{x_1,x_2} \widetilde \psi \, dx=I(\Om,1,0)=P.
\end{multline*}
Hence
\begin{multline*}
PQ\geq \min\Big\{
\int_0^{\widetilde P}\int_0^1|\nabla y(u,v)|^2dudv:
y\in H^1_{per}((0,\widetilde P)\times(0,1)), y(\cdot,0)=0,
\\\int_0^{\widetilde P}\partial_2y(u,1)du=P\Big\}.
\end{multline*}
The minimum depends on $\tilde P$ and therefore
it depends on the shape of the domain $\Omega$, because 
$\tilde P=C(\Omega,1,0)$.
The minimum is reached exactly at the function $y(u,v)=(P/ \tilde P) v$, which
shows that the value of the minimum 
is $(P/\tilde P)^2 \tilde P=P^2/C(\Omega,1,0)$. Hence
$PQ\geq P^2/C(\Omega,1,0)$ and $C(\Omega,1,0)\geq P/Q$ with equality exactly 
when $\Omega=\Omega_Q$.
Since $\Om\neq \Om_Q$ we now have $QC(\Om,1,0)-P>0$, so the equations
for $\lambda_1$ and $\lambda_2$ can be solved uniquely.
\end{proof}

\begin{prop}
\label{prop: fixed omega and zeta}
Given $\Om\in\kO\backslash\{\Om_Q\}$,
$\zeta\in L^2 (\Om)$ and $\mu,\nu\in\RR$,
the minimizer $\xi_{\Om,\zeta}$ for
the kinetic energy over
$\{\xi \in H^{1/2}_{per}(\sS):C(\Om,\xi,\zeta)=\mu,I(\Om,\xi,\zeta)=\nu\}$
exists and is unique, and 
there exist $\lambda_1$ and $\lambda_2$ in $\RR$ such that
\begin{equation}
\label{eq: def best xi}
\xi=\xi_{\Om,\zeta}=(\lambda_{1}x_2+\lambda_{2})|_{\sS}~.
\end{equation}
\end{prop}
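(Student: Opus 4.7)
The plan is to exploit that $\sK(\xi):=\tfrac12\int_\Om|\nabla\psi|^2\,dx$ is a strictly convex quadratic in $\xi\in H^{1/2}_{per}(\sS)$ (since $\psi$ depends affinely on $\xi$ through \eqref{fbvp}(a--d)), while the constraints $C(\Om,\xi,\zeta)=\mu$ and $I(\Om,\xi,\zeta)=\nu$ are affine in $\xi$. Rather than invoking an abstract existence theorem plus Lagrange multipliers, I would verify directly that the candidate $\xi^{*}:=(\lambda_1 x_2+\lambda_2)|_{\sS}$ produced by Lemma \ref{lemma: on lambda_i} is the unique minimizer.

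Let $\psi^{*}$ denote the stream function associated with $\xi^{*}$. Any admissible $\xi$ can be written as $\xi=\xi^{*}+\eta$; subtracting the two constraints yields $C(\Om,\eta,0)=0$ and $I(\Om,\eta,0)=0$. Denoting by $\psi_\eta\in H^1_{per}(\Om)$ the weakly harmonic function with trace $\eta$ on $\sS$ and vanishing on $\{x_2=0\}$, linearity of the boundary value problem gives $\psi=\psi^{*}+\psi_\eta$, and therefore
\begin{equation*}
\sK(\xi)=\sK(\xi^{*})+\int_\Om\nabla\psi^{*}\cdot\nabla\psi_\eta\,dx+\tfrac12\int_\Om|\nabla\psi_\eta|^2\,dx.
\end{equation*}
The proof reduces to showing that the cross term vanishes whenever $\eta$ satisfies the two linearised constraints.

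For this I would decompose $\psi^{*}=\lambda_1 x_2+\lambda_2\widetilde\psi+\psi_0$, with $\widetilde\psi$ as in \eqref{eq: widetilde psi} and $\psi_0\in H^1_{per}(\Om)$ the weak solution of $-\Delta\psi_0=\zeta$ with zero trace on $\sS\cup\{x_2=0\}$; the boundary values on $\sS$ and the Laplacians add up correctly. Three identities then give the claim: (i) $\int_\Om\nabla(\lambda_1 x_2)\cdot\nabla\psi_\eta\,dx=\lambda_1 I(\Om,\eta,0)=0$ directly from the definition of $I$; (ii) $\int_\Om\nabla(\lambda_2\widetilde\psi)\cdot\nabla\psi_\eta\,dx=\lambda_2 C(\Om,\eta,0)=0$, invoking the weak definition of $C$ given after \eqref{circ} with $\hat\psi=\widetilde\psi$ and zero source; (iii) $\int_\Om\nabla\psi_0\cdot\nabla\psi_\eta\,dx=0$, because $\psi_\eta$ is weakly harmonic in $\Om$ and $\psi_0$ is an admissible test function, lying in $H^1_{per}(\Om)$ with zero traces on $\sS$ and $\{x_2=0\}$.

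Combining these gives $\sK(\xi)=\sK(\xi^{*})+\tfrac12\int_\Om|\nabla\psi_\eta|^2\,dx\geq\sK(\xi^{*})$, with equality iff $\nabla\psi_\eta\equiv 0$; the vanishing trace on $\{x_2=0\}$ then forces $\psi_\eta\equiv 0$ and hence $\eta\equiv 0$. This simultaneously yields existence, uniqueness, and the representation \eqref{eq: def best xi}. The only delicate step is identity (ii): the identification $\int_\Om\nabla\widetilde\psi\cdot\nabla\psi_\eta\,dx=C(\Om,\eta,0)$ must be carried out via the weak formulation of circulation from the text rather than as a surface integral, which would require additional regularity of $\psi_\eta$ on $\sS$; apart from this, no compactness or abstract minimization argument is needed beyond Lemma \ref{lemma: on lambda_i}.
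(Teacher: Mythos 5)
Your proof is correct, but it takes a genuinely different route from the paper's. The paper first obtains a minimizer abstractly (direct method on the convex constraint set), then derives the Euler--Lagrange condition $\int_\Om\nabla(\psi-\lambda_1x_2-\lambda_2\widetilde\psi)\cdot\nabla h\,dx=0$ for all periodic harmonic $h$ vanishing on the bottom (with $\lambda_1,\lambda_2$ given by explicit quotients), and finally must argue — via the conformal map $\widetilde\phi+i\widetilde\psi$ and a change of variables to the strip — that this orthogonality forces the trace $\psi-\lambda_1x_2-\lambda_2\widetilde\psi$ to vanish on $\sS$; uniqueness is then inferred from the uniqueness of $\lambda_1,\lambda_2$ in Lemma \ref{lemma: on lambda_i}. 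You instead take the candidate supplied by Lemma \ref{lemma: on lambda_i} and verify its global optimality directly by completing the square: the decomposition $\psi^{*}=\lambda_1x_2+\lambda_2\widetilde\psi+\psi_0$ together with the two linearised constraints and the weak harmonicity of $\psi_\eta$ kills the cross term, and strict convexity of the Dirichlet energy in $\psi_\eta$ gives existence, uniqueness and the representation \eqref{eq: def best xi} in one stroke. What your approach buys is the elimination of both the abstract existence step and the trace-recovery argument in the strip (the most delicate part of the paper's proof); what it gives up is that it only works because the optimal form of $\xi$ is guessed in advance from Lemma \ref{lemma: on lambda_i}, whereas the paper's variational argument would discover that form even without the lemma. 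Your handling of identity (ii) through the weak (volume-integral) definition of $C$ is exactly right and is the point where a naive surface-integral computation would demand unavailable regularity.
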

\begin{proof}

We consider the minimum of the functional
$\psi\rightarrow \half \int_\Om |\nabla \psi|^2 dx$
over $\psi\in H^{1}_{per}(\Om)$ such that
$$-\Delta \psi=\zeta~\text{ on }~\Omega,~\psi(\cdot,0)=0,$$
$$\int_{\Om}\nabla \psi\cdot \nabla \widetilde \psi\, dx 
-\int_{\Om}\zeta \widetilde \psi\, dx =\mu
~\text{ and }~
\int_{\Om}\nabla \psi\cdot\nabla x_2 \, dx =\nu,
$$
where $\widetilde \psi$ is defined in \eqref{eq: widetilde psi}
(such a  $\psi$ exists, by Lemma  \ref{lemma: on lambda_i}).
A standard convexity argument gives a
minimizer $\psi$ and it suffices to set $\xi=\psi|_{\sS}$.

Consider any $h\in H^1_{per}(\Om)$
such that
$$\Delta h=0,~h|_{\{x_2=0\}}=0,~\int_{\Om}\nabla h\cdot\nabla x_2 \, dx =0\text{ and }
\int_{\Om}\nabla h\cdot\nabla \widetilde \psi \, dx =0.$$
For all $t\neq 0$, we get
$$\frac 1 2 \int_\Om|\nabla \psi|^2dx 
\leq \frac 1 2 \int_\Om|\nabla (\psi+th)|^2dx 
= \frac 1 2 \int_\Om|\nabla \psi|^2dx 
+  t \int_\Om\nabla \psi\cdot\nabla h\, dx 
+ \frac 1 2 t^2  \int_\Om|\nabla h|^2dx $$
and thus
$\int_\Om\nabla \psi\cdot\nabla h\, dx=0$. 
More generally, if $h\in H^1_{per}(\Om)$ only satisfies $\Delta h=0$ and
$h|_{\{x_2=0\}}=0$, we consider instead of $h$ the function
\begin{multline*}
h-\frac{P\int_\Om \nabla h\cdot\nabla \widetilde \psi\, dx-
\int_\Om|\nabla \widetilde \psi|^2\, dx
\int_\Om \nabla h\cdot \nabla x_2 \, dx}{P^2-PQ
\int_\Om|\nabla \widetilde \psi|^2\, dx}x_2 \\
-\frac{P\int_\Om \nabla h\cdot\nabla x_2\, dx-
PQ\int_\Om \nabla h\cdot \nabla \widetilde \psi\, dx}{P^2-PQ
\int_\Om|\nabla \widetilde \psi|^2\, dx}\widetilde\psi,
\end{multline*}
which satisfies the two additional constraints,
in view of the relations
$$
\int_\Omega \nabla x_2 \cdot \nabla \widetilde\psi\,  dx= P; \quad\quad\quad
\int_\Omega |\nabla x_2|^2 dx = PQ.
$$
Instead of $0=\int_\Om\nabla \psi\cdot\nabla h\, dx$, we get 
\begin{multline*}
0=\int_\Om \nabla \psi\cdot \nabla h\, dx-\frac{P\int_\Om \nabla h\cdot\nabla \widetilde \psi\, dx-
\int_\Om|\nabla \widetilde \psi|^2\, dx
\int_\Om \nabla h\cdot \nabla x_2 \, dx}{P^2-PQ
\int_\Om|\nabla \widetilde \psi|^2\, dx
}\int_\Om
\nabla x_2\cdot \nabla \psi\, dx\\
-\frac{P\int_\Om \nabla h\cdot\nabla x_2\, dx-
PQ\int_\Om \nabla h\cdot \nabla \widetilde \psi\, dx}{P^2-PQ
\int_\Om|\nabla \widetilde \psi|^2\, dx
}
\int_{\Om}\nabla \widetilde\psi\cdot\nabla \psi\, dx\\
=\int_\Om \nabla \psi\cdot \nabla h\, dx
+\frac{
\int_\Om|\nabla \widetilde \psi|^2\, dx
\int_\Om \nabla x_2\cdot\nabla \psi\, dx-
P\int_\Om \nabla \widetilde \psi\cdot \nabla \psi \, dx}
{P^2-PQ
\int_\Om|\nabla \widetilde \psi|^2\, dx
}\int_\Om
\nabla x_2\cdot \nabla h\, dx\\
+\frac{PQ\int_\Om \nabla \widetilde \psi\cdot\nabla \psi\, dx-
P\int_\Om \nabla x_2\cdot \nabla \psi\, dx}{P^2-PQ
\int_\Om|\nabla \widetilde \psi|^2\, dx
}
\int_{\Om}\nabla \widetilde\psi\cdot\nabla h\, dx
\\=\int_\Om \nabla\Big\{ \psi
+\frac{
\int_\Om|\nabla \widetilde \psi|^2\, dx
\int_\Om \nabla x_2\cdot\nabla \psi\, dx-
P\int_\Om \nabla \widetilde \psi\cdot \nabla \psi \, dx}
{P^2-PQ
\int_\Om|\nabla \widetilde \psi|^2\, dx
}x_2\\
+\frac{PQ\int_\Om \nabla \widetilde \psi\cdot\nabla \psi\, dx-
P\int_\Om \nabla x_2\cdot \nabla \psi\, dx}{P^2-PQ
\int_\Om|\nabla \widetilde \psi|^2\, dx
}
\widetilde\psi\Big\}
\cdot \nabla h\, dx
\end{multline*}
for all  $h\in H^1_{per}(\Om)$ such that $\Delta h=0$ and $h|_{\{x_2=0\}}=0$.
Hence, as we explain below,
there exist   $\lambda_1$  and $\lambda_2$ in $\RR$ satisfying
\eqref{eq: def best xi},
namely
$$\lambda_1=\lambda_{1,\Om,\zeta}
=-
\frac{
\int_\Om|\nabla \widetilde \psi|^2\, dx
\int_\Om \nabla x_2\cdot\nabla \psi\, dx-
P\int_\Om \nabla \widetilde \psi\cdot \nabla \psi \, dx}
{P^2-PQ
\int_\Om|\nabla \widetilde \psi|^2\, dx
}$$
and
$$\lambda_2=\lambda_{2,\Om,\zeta}
=-\frac{PQ\int_\Om \nabla \widetilde \psi\cdot\nabla \psi\, dx-
P\int_\Om \nabla x_2\cdot \nabla \psi\, dx}{P^2-PQ
\int_\Om|\nabla \widetilde \psi|^2\, dx
}
$$
Observe that these values must be equal to those obtained in Lemma
\ref{lemma: on lambda_i}, but here they are expressed with the help of the
minimal stream function $\psi$.
Hence the uniqueness statement in
Lemma \ref{lemma: on lambda_i} gives the desired uniqueness of the minimizer
$\xi$.

Let us briefly explain why 
$\psi-\lambda_1 x_2-\lambda_2\widetilde\psi=0$ on $\mathscr S$ if
$$
\int_\Om\nabla(\psi-\lambda_1 x_2-\lambda_2\widetilde\psi)\cdot\nabla h\, dx=0
$$
for all  $h\in H^1_{per}(\Om)$ such that $\Delta h=0$ and $h|_{\{x_2=0\}}=0$.
Consider the holomorphic map $\widetilde \phi+i\widetilde \psi$ 
in \eqref{eq: widetilde psi}
and write $\psi=\psi_0\circ (\widetilde\phi+i\widetilde\psi)$ and
$h=h_0\circ (\widetilde\phi+i\widetilde\psi)$. We also use the notation
$(u,v)$ for the coordinates in $(0,\widetilde P)\times (0,1)$
and $(u,v)\rightarrow x_2(u,v)$ for the map that
associates with $(u,v)$
the $x_2$ coordinate of the corresponding point in $\Omega$. We get
$$\int_{0}^{\widetilde P}
\int_0^1\nabla(\psi_0(u,v)-\lambda_1 x_2(u,v)-\lambda_2v)
\cdot\nabla h_0(u,v)\, du dv=0$$
for all  $h_0\in H^1_{per}((0,\widetilde P)\times (0,1))$ such that 
$\Delta h_0=0$ and $h_0|_{\{v=0\}}=0$,
changing variables with the aid of \eqref{chvar}.
The upper boundary $\{v_2=1\}$ being regular, we can deduce that
$\psi_0(u,1)-\lambda_1 x_2(u,1)-\lambda_2=0$ for almost all $u$.
\end{proof}

equation \eqref{eq: def best xi} is a weak formulation of the condition that
the modified velocity field $(\partial_2\psi-\lambda_1 ,-\partial_1\psi)$ be 
tangent to  the upper boundary and correspond to a stationary wave 
that travels with speed $\lambda_1$ to the right.
This tangency condition would hold classically if the free surface were
of class $C^2$.
However our existence theorem in Section 4
below does
not yield enough regularity for this to be asserted at present.

\begin{prop}
\label{prop: fixed omega}
Let  $\Om\in \kO\backslash\{\Om_Q\}$ be given and let
$(\Om,\xi,\zeta)$  be a minimizer of $\sL$ over all
$(\Om,\widetilde \xi,\widetilde \zeta)$ such that
$\sL(\Om,\xi,\zeta)<\infty$,
 $\widetilde \xi\in  H^{1/2}_{per}(\mathscr S)$, 
$\widetilde  \zeta \in\ol{\sR(\Om)}^w$, 
$C(\Om,\widetilde \xi,\widetilde \zeta)=\mu$ and
$I(\Om,\widetilde \xi,\widetilde \zeta)=\nu$.

Then
there exist   $\lambda_1$  and $\lambda_2$ in $\RR$ such that
$\xi=(\lambda_{1}x_2+\lambda_{2})|_{\sS}$
and a decreasing function $\lambda$ such that 
$$
\zeta=\lambda\circ  (\psi-\lambda_{1} x_2) 
\text{ a.e. on } \Omega,
$$
where $\psi$ is the 
stream function related to
$(\Om,\xi,\zeta)$.

If $\zeta_Q$ is essentially one-signed then $\zeta \in \mathcal{R}(\Omega)$.
\end{prop}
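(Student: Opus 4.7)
The first claim follows from Proposition~\ref{prop: fixed omega and zeta}. Indeed, for fixed $\Omega$ both the gravitational term $g\int_\Omega x_2\,dx$ and the surface term $\sE(\sS)$ in \eqref{eq: definition of sL} are independent of $\xi$ and $\zeta$; so at any minimiser $(\Omega,\xi,\zeta)$ the trace $\xi$ must itself minimise the kinetic energy among all $\widetilde\xi\in H^{1/2}_{per}(\sS)$ with $C(\Omega,\widetilde\xi,\zeta)=\mu$ and $I(\Omega,\widetilde\xi,\zeta)=\nu$. Proposition~\ref{prop: fixed omega and zeta} then yields the required constants $\lambda_1,\lambda_2\in\RR$.

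To obtain the vorticity relation I would perform a convex perturbation of $\zeta$ with $\xi$ adjusted to preserve the two constraints. For any $\zeta''\in\ol{\sR(\Omega)}^w$, set $\zeta^t:=(1-t)\zeta+t\zeta''$, which lies in $\ol{\sR(\Omega)}^w$ by convexity. Lemma~\ref{lemma: on lambda_i} furnishes unique $\xi^t=\lambda_1^t x_2+\lambda_2^t$ so that $C(\Omega,\xi^t,\zeta^t)=\mu$ and $I(\Omega,\xi^t,\zeta^t)=\nu$; denote by $\psi^t$ the corresponding stream function and use dots for $\partial_t|_{t=0}$. The joint linearity of $C$ and $I$ in $(\xi,\zeta)$ gives $C(\Omega,\dot\xi,\zeta''-\zeta)=I(\Omega,\dot\xi,\zeta''-\zeta)=0$. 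A Green identity combined with $\psi|_\sS=\lambda_1 x_2+\lambda_2$, $\psi|_{x_2=0}=\dot\psi|_{x_2=0}=0$, and $P$-periodicity should then collapse the boundary contributions to yield
$$
\frac{d}{dt}\Big|_{t=0}\frac12\int_\Omega|\nabla\psi^t|^2\,dx=\int_\Omega(\psi-\lambda_1 x_2)(\zeta''-\zeta)\,dx.
$$
Since the remaining parts of $\sL$ do not depend on $t$, minimality forces this derivative to be $\geq 0$ for every admissible $\zeta''$, so $\zeta$ minimises the linear functional $\zeta''\mapsto\int_\Omega(\psi-\lambda_1 x_2)\,\zeta''\,dx$ over the weakly closed convex set $\ol{\sR(\Omega)}^w$. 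Burton's rearrangement theorem (as exploited in \cite{Bu,BuTo}) then represents such a minimiser as a decreasing function of $\psi-\lambda_1 x_2$, giving $\zeta=\lambda\circ(\psi-\lambda_1 x_2)$ a.e.

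For the last assertion, when $\zeta_Q$ is essentially one-signed the standard Burton theory upgrades the conclusion: the monotone rearrangement of $\zeta_Q$ along the level sets of $\psi-\lambda_1 x_2$ actually lies in $\sR(\Omega)$ rather than merely in its weak closure, and therefore coincides with $\zeta$. I expect the main difficulty of the proof to lie in the first-variation step: one must carefully track how the compensating variation of $\xi^t$, dictated by the two constraints $C=\mu$ and $I=\nu$, feeds through the boundary integrals in the Green identity and combines with the term involving $\psi$ to produce precisely the shift $\psi\mapsto\psi-\lambda_1 x_2$—the stream function in the Galilean frame in which $\sS$ is a streamline. A secondary subtlety is that Burton's characterisation $\zeta=\lambda\circ\phi$ has to be read up to arbitrary redefinition on level sets of $\phi:=\psi-\lambda_1 x_2$ of positive measure.
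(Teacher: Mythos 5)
Your proposal is correct and follows essentially the same route as the paper: the form of $\xi$ is deduced from Proposition \ref{prop: fixed omega and zeta}, the vorticity relation comes from the convex variation $h_t=(1-t)\zeta+th$ with constraint-preserving affine boundary data, whose first variation (computed in the paper by subtracting $\lambda_1 x_2+\lambda_2\widetilde\psi$ to kill the boundary terms and using that the $C$- and $I$-increments vanish) is exactly your $\int_\Omega(\psi-\lambda_1x_2)(h-\zeta)\,dx$, and the conclusion then follows from the rearrangement lemma of \cite{BuTo}. The one step you leave as ``should collapse'' is precisely the short Green-identity computation the paper carries out, and your stated formula for the derivative is the correct one.
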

{\bf Remark.}
Proposition \ref{prop: fixed omega} contains no assertion concerning
existence of minimizers.
Sufficient conditions for their existence will be given later.

\begin{proof}
Only the last statement need be proved.
For $h\in L^2(\Om)$ define $\psi_h\in H^{1}_{per}(\Omega)$  by 
$$-\Delta \psi_h =h,$$ 
$$ \psi_h=0 \text{ on }\{x_2=0\},$$
$$ \psi_h|_{\sS} \hbox{ is a linear combination of $1$ and }  x_2,$$ 
$$\mu=  \int_{\sS}  \nabla \psi_h \cdot n
\, dS,~~
\nu=\int_{\Om}\partial_2  \psi_h   \, dx.$$ 
Because $\Om\neq \Om_Q$ it follows that
$\psi_h$ is well defined and
$\psi_h|_{\sS}= \lambda_{1,\Omega,h} x_2+\lambda_{2,\Omega,h}$
in terms of the unique constants given by Lemma \ref{lemma: on lambda_i}.
In particular we take
$\lambda_1=\lambda_{1,\Om,\zeta}$,
$\lambda_2=\lambda_{2,\Om,\zeta}$ and observe that
$\psi_{\zeta}|_{\sS}$ is equal to the optimal 
$\xi_{\Om,\zeta}$ of Proposition \ref{prop: fixed omega and zeta}.
Then $\xi=\xi_{\Om,\zeta}$
and, for fixed $\Omega$,
 $\zeta$ minimizes the function 
$$h\rightarrow \frac 1 2 \int_{\Omega}|\nabla  \psi_{h}|^2dx$$ 
over all $h\in L^2(\Om)$ such that $h$  is
in $\ol{\sR(\Om)}^w$. 
As in \cite{BuTo}, for such a $h$ and all $t\in[0,1]$, we set 
$h_t=(1-t)\zeta+th \in \ol{\sR(\Om)}^w$ and get that 
$\psi_{h_t}=(1-t)\psi_{\zeta}+t\psi_h$ and that 
\begin{multline*} 
0\leq\frac 1 2 \int_{\Omega}|\nabla \psi_{h_t}|^2dx 
- \frac 1 2 \int_{\Omega}|\nabla \psi_{\zeta}|^2dx 
=t\int_{\Omega}\nabla(\psi_h-\psi_{\zeta}) 
\cdot \nabla \psi_{\zeta}\, dx+o(t) 
\\=t\int_{\Omega}\nabla(\psi_h-\psi_{\zeta}) 
\cdot \nabla (\psi_{\zeta}-\lambda_{1}x_2
-\lambda_{2}\widetilde\psi)\, dx
\\+t\lambda_{2}\int_{\Omega}\nabla(\psi_h-\psi_{\zeta}) 
\cdot \nabla \widetilde\psi\, dx
+t\lambda_{1}\int_{\Omega}\nabla(\psi_h-\psi_{\zeta}) 
\cdot \nabla x_2 \, dx+o(t) 
\\=t\int_{\Omega}(h-\zeta) 
(\psi_{\zeta}-\lambda_{1}x_2
-\lambda_{2}\widetilde\psi)\, dx
+t\lambda_{2}\int_{\Omega}(h-\zeta)\widetilde\psi \, dx
+o(t) 
\\=t\int_{\Omega}(h-\zeta)(\psi_{\zeta}-\lambda_{1} x_2)dx
+o(t) 
\end{multline*} 
 because 
$$(\psi_{\zeta}-\lambda_{1}x_2-\lambda_{2}\widetilde\psi)
|_{\partial\mathit\Om}=0,$$
$$
\int_{\Omega}\nabla(\psi_h-\psi_{\zeta}) \cdot\nabla \widetilde\psi \, dx
-\int_{\Omega}(h-\zeta)\widetilde \psi\, dx
=C(\Om,\psi_h|_{\sS},h)-C(\Om,\psi_\zeta|_{\sS},\zeta)=0,$$
$$ \int_{\Omega}\nabla(\psi_h-\psi_{\zeta}) \cdot \nabla x_2 \, dx
=I(\Om,\psi_h|_{\sS},h)-I(\Om,\psi_\zeta|_{\sS},\zeta)=0.$$

Hence 
$\int_{\Omega}(h-\zeta)(\psi_{\zeta}
-\lambda_{1} x_2)dx\geq 0$
and the map  
$$h\rightarrow 
\int_{\Omega}h(\psi_{\zeta}-\lambda_{1} x_2)dx$$ 
reaches its minimum at $\zeta$, where   $h\in\ol{\sR(\Om)}^w$. 
 As moreover $-\Delta (\psi_{\zeta}-\lambda_{1} x_2)
=\zeta$, 
the same argument as in \cite[Lemma 2.3]{BuTo} ensures that  
there exists a decreasing function $\lambda$ such that 
$$
\zeta=\lambda\circ  (\psi_{\zeta}-\lambda_{1,\Om,\zeta} x_2) 
\text{ a.e. on } \Omega.
$$
If $\zeta_Q$ is one-signed except on a set of zero measure then it follows
as in \cite[Lemma 2.3]{BuTo} that $\zeta \in \sR(\Om)$.
\end{proof}

\section{The Bernoulli Boundary Condition}
In what follows, we consider some fixed minimizer $(\ul\Om,\ul\xi,\ul \zeta)$
and outline how to adapt the method in \cite{BuTo}
 to show that the Bernoulli condition
 \eqref{bern} or \eqref{bern''} holds in some weak sense.
Let $\lambda_{1,\underline\Omega,\underline\zeta}$, $\lambda_{2,
\underline\Omega,\underline\zeta}$ and $\lambda$
be the constants and decreasing function given by Proposition
\ref{prop: fixed omega}.

\begin{theorem}
Suppose that the upper boundary $ \ul{\mathscr S}$ of $\ul{\mathit{ \Om}}$ 
is given by 
an $H^{2}$ regular curve and
$$\ul \Om\neq \Om_Q.$$
We set $\psi_0=\underline \psi-\lambda_{1,\ul\Om,\ul \zeta}  x_2$
and we  let
$p:\RR\rightarrow \RR^2$ such that $|p'(s)|=1$ on $\RR$ be an
$H^2$-parametrisation of $\ul{\mathscr S}$.
Then, for all solenoidal smooth vector fields $\omega$ 
defined in a neighbourhood of  $\overline{\ul{\mathit \Om}}$,
 vanishing on $\{x_2=0\}$ 
and $P$-periodic in $x_1$, 
any minimizer $(\ul\Om,\ul\xi,\ul\zeta)$ satisfies
\begin{multline*}
0=\int_{\ul{ \Omega}} \nabla \psi_0\cdot D\om 
\nabla \psi_0 dx
+g\int_{\ul{ \Omega}}\nabla\cdot(x_2\om)dx
\\+\beta T(\ell(\sS)-P)^{\beta-1}\int_{0}^{\ell(\ul\sS)}(\om\circ p)'(s)\cdot  p'(s)ds
\\+E\int_{0}^{\ell(\ul\sS)}\left(2(w\circ p)''\cdot p''
-3|p''|^2(\om\circ p)'\cdot p'\right)ds.
\end{multline*}
If $p$ and $\psi_0$ are regular enough, this can be written
\begin{multline*}
0=\int_{\ul \sS}\left(\frac 1 2 |\nabla \psi_0|^2+\Lambda
(\psi_0)\right)(\om\cdot n)dS
+g\int_{\ul \sS}x_2(\om\cdot n) dS
\\-\beta T(\ell(\sS)-P)^{\beta-1}\int_{\ul\sS}
\sigma(\om\cdot n)dS
+E\int_{\ul\sS}(\sigma^3+2\sigma'')(\om\cdot n)dS
\end{multline*}
where $\Lambda$ is a primitive of $\lambda$ and $\sigma$ is
the curvature, and thus
$$
\frac 1 2 |\nabla \psi_0|^2+gx_2
-\beta T(\ell(\sS)-P)^{\beta-1}
\sigma+E(\sigma^3+2\sigma'')$$
is constant on $\ul{\mathscr S}$.
\end{theorem}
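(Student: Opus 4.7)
My plan is an inner-variation argument using the flow of $\om$. Let $\Phi_t$ be generated by $\om$: $\Phi_0=\mathrm{id}$ and $\partial_t \Phi_t=\om\circ\Phi_t$. Since $\om$ is smooth, $P$-periodic in $x_1$, solenoidal, and vanishes on $\{x_2=0\}$, the maps $\Phi_t$ are area-preserving, commute with $x\mapsto x+(P,0)$, and fix the bottom pointwise. Set $\Omega_t:=\Phi_t(\ul{\mathit\Om})\in\kO$, $\sS_t:=\Phi_t(\ul\sS)$, and $\zeta_t:=\ul\zeta\circ\Phi_t^{-1}$; since $\Phi_t$ preserves Lebesgue measure, $\zeta_t\in\ol{\sR(\Omega_t)}^w$. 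Apply Lemma \ref{lemma: on lambda_i} to pick unique $\lambda_{1,t},\lambda_{2,t}$ making $\xi_t:=(\lambda_{1,t}x_2+\lambda_{2,t})|_{\sS_t}$ satisfy $C(\Omega_t,\xi_t,\zeta_t)=\mu$ and $I(\Omega_t,\xi_t,\zeta_t)=\nu$, and let $\psi_t$ be the corresponding stream function. Then $(\Omega_t,\xi_t,\zeta_t)$ is admissible and coincides with $(\ul{\mathit\Om},\ul\xi,\ul\zeta)$ at $t=0$. Running the construction with $-\om$ is equally admissible, so minimality of $(\ul{\mathit\Om},\ul\xi,\ul\zeta)$ forces $\frac{d}{dt}\sL(\Omega_t,\xi_t,\zeta_t)\big|_{t=0}=0$.

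\textbf{Easy terms.} Reynolds' transport theorem together with $\nabla\cdot\om=0$ gives $g\,\frac{d}{dt}\big|_{t=0}\int_{\Omega_t}x_2\,dx = g\int_{\ul{\mathit\Om}}\nabla\cdot(x_2\om)\,dx$. For the surface energy, parametrize $\sS_t$ by $p_t(s):=\Phi_t(p(s))$. The classical first variation of length yields the $T$ contribution. For the bending integral, using that $p_0$ is unit-speed (so $p_0'\cdot p_0''=0$) reduces the first variation of $\int|\sigma_t|^2\,ds_t$ at leading order to the variation of $\int|p_t''|^2|p_t'|^{-3}\,ds$; the terms $2(\om\circ p)''\cdot p''$ and $-3|p''|^2(\om\circ p)'\cdot p'$ then arise respectively from $\partial_t|p_t''|^2$ and $\partial_t|p_t'|^{-3}$ at $t=0$.

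\textbf{Kinetic term and the main obstacle.} Change variables $y=\Phi_t(x)$, set $v_t:=\psi_t\circ\Phi_t$ on $\ul{\mathit\Om}$, and let $A_t:=(D\Phi_t)^{-1}(D\Phi_t)^{-T}$, so $A_0=I$ and $\dot A_0=-(D\om+D\om^T)$; since $\det D\Phi_t\equiv 1$,
$$\tfrac12\int_{\Omega_t}|\nabla\psi_t|^2\,dy = \tfrac12\int_{\ul{\mathit\Om}}\nabla v_t\cdot A_t\nabla v_t\,dx,$$
whose derivative at $t=0$ equals
$$\int_{\ul{\mathit\Om}}\nabla\ul\psi\cdot\nabla\dot v_0\,dx - \int_{\ul{\mathit\Om}}\nabla\ul\psi\cdot D\om\,\nabla\ul\psi\,dx.$$
The delicate step is eliminating the Lagrangian derivative $\dot v_0$. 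Its boundary trace, obtained by differentiating $v_t|_{\ul\sS}(x)=\lambda_{1,t}(\Phi_t(x))_2+\lambda_{2,t}$, equals $\dot\lambda_{1,0}x_2+\lambda_1\om_2+\dot\lambda_{2,0}$. I would invoke the Euler identity from the proof of Proposition \ref{prop: fixed omega and zeta}---that $\nabla(\ul\psi-\lambda_1 x_2-\lambda_2\widetilde\psi)$ is $L^2$-orthogonal on $\ul{\mathit\Om}$ to every $\nabla h$ with $h$ harmonic, $P$-periodic, and zero on the bottom---to split $\dot v_0$ into a harmonic piece (killed by orthogonality) and a combination of $x_2$ and $\widetilde\psi$ whose coefficients are pinned down by differentiating the conservation laws $C(\Omega_t,\xi_t,\zeta_t)\equiv\mu$ and $I(\Omega_t,\xi_t,\zeta_t)\equiv\nu$ at $t=0$. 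This collapses the first integral into an explicit $\om\cdot n$-boundary term; writing $\nabla\ul\psi=\nabla\psi_0+(0,\lambda_1)$ in the second integral and combining everything with the gravity and surface contributions reproduces the first displayed identity.

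\textbf{Strong form.} Under the extra regularity of $p$ and $\psi_0$, integrate by parts. Using $\nabla\cdot\om=0$, $\om|_{x_2=0}=0$, and $-\Delta\psi_0=\ul\zeta=\lambda\circ\psi_0$ from Proposition \ref{prop: fixed omega}, with $\Lambda$ a primitive of $\lambda$, the bulk integral $\int_{\ul{\mathit\Om}}\nabla\psi_0\cdot D\om\,\nabla\psi_0\,dx$ reduces to $\int_{\ul\sS}(\tfrac12|\nabla\psi_0|^2+\Lambda(\psi_0))(\om\cdot n)\,dS$; the gravity, stretching, and bending contributions similarly collapse onto $\ul\sS$ with integrands $gx_2$, $-\beta T(\ell(\sS)-P)^{\beta-1}\sigma$, and $E(\sigma^3+2\sigma'')$ tested against $\om\cdot n$. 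Any smooth $P$-periodic mean-zero function on $\ul\sS$ arises as $\om\cdot n$ for some admissible $\om$, so the bracketed integrand must be constant on $\ul{\mathscr S}$; since $\psi_0\equiv\lambda_2$ on $\ul\sS$ makes $\Lambda(\psi_0)$ itself constant, this yields the final Bernoulli-type pointwise identity.
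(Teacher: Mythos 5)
Your overall framework coincides with the paper's: deform the domain by the flow $\tau(t)$ of $\om$, convect $\ul\zeta$, re-solve for the optimal boundary data via Lemma \ref{lemma: on lambda_i}, and set the $t$-derivative of $\sL$ to zero at $t=0$; the gravity and surface terms are then routine. The difficulty is, as you say, entirely in eliminating the Lagrangian derivative $\dot v_0$ from the kinetic term, and it is precisely there that your sketch has a gap.

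First, your proposed decomposition of $\dot v_0$ into ``a harmonic piece plus a combination of $x_2$ and $\widetilde\psi$'' is not available: differentiating the transported equation $-\div(A_t\nabla v_t)=\ul\zeta$ at $t=0$ gives $\Delta\dot v_0=\div\big((D\om+D\om^T)\nabla\ul\psi\big)$, so $\dot v_0$ is not harmonic and carries an inhomogeneous bulk part that the orthogonality relation from Proposition \ref{prop: fixed omega and zeta} does not see. Second, even if one integrates $\int_{\ul\Om}\nabla\ul\psi\cdot\nabla\dot v_0\,dx$ by parts the other way, a bulk term $\int_{\ul\Om}\ul\zeta\,\dot v_0\,dx$ survives, and neither the differentiated constraints $\frac{d}{dt}C=\frac{d}{dt}I=0$ nor the orthogonality identity control it; chasing it leads back to $\dot K(0)$ itself. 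The paper avoids all of this with one device you are missing: it pairs the moving solution not with $\ul\psi$ but with the \emph{transported} function $\psi_0\circ\kappa(t)$, which is identically $\lambda_{2,\ul\Om,\ul\zeta}$ on the moving free surface and $0$ on the bottom. Green's identity together with the conservation of the shifted circulation $C(\Om(t),\xi_0(t),\zeta(t))=\mu-\lambda_1 P$ and of $\int_{\Om(t)}\zeta(t)\,(\psi_0\circ\kappa(t))\,dx=\int_{\ul\Om}\ul\zeta\,\psi_0\,dx$ shows that $\int_{\ul\Om}\Gamma(t)\nabla\Psi_0(t)\cdot\Gamma(t)\nabla\psi_0\,dx$ is \emph{constant in $t$}; differentiating this constant yields exactly the scalar identity \eqref{lineard} that expresses $\int\nabla\dot\Psi_0(0)\cdot\nabla\psi_0$ in terms of $\int\nabla\psi_0\cdot D\om\,\nabla\psi_0$, whence $\dot K(0)=\int_{\ul\Om}\nabla\psi_0\cdot D\om\,\nabla\psi_0\,dx$. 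Relatedly, the paper reduces the kinetic energy to $\tfrac12\int|\nabla\ol\psi_0(t)|^2$ plus a constant \emph{before} differentiating (using conservation of $I$), whereas your substitution $\nabla\ul\psi=\nabla\psi_0+(0,\lambda_1)$ after differentiating leaves cross terms $\lambda_1\int e_2\cdot D\om\,\nabla\psi_0\,dx$ whose cancellation you have not accounted for. Until the elimination of $\dot v_0$ is carried out correctly, the first displayed identity is not established.
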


\begin{proof}
We only explain how to get the term 
$$\int_{\ul{\mathit \Omega}} \nabla \psi_0\cdot D\om 
\nabla \psi_0 dx$$
by following the method of
\cite[Subsection 2.3]{BuTo}, since the other terms do not
involve $\psi_0$ so the calculations are the same as in \cite{BuTo}.
For small $t \geq 0$ let the diffeomorphims $\tau$ be defined on $\Omega$
by $\tau(t)(x)=X(t)$, where
$$\dot X(t)=\om(X(t)),~~X(0)=x,$$
and
$$\Om(t) = \tau(t)\ul \Om,  \quad \zeta(t) = \ul\zeta\circ\kappa(t)\in 
\ol{\sR(\Om(t))}^w ,$$
 where  $\kappa(t)$ denotes the inverse of $\tau(t)$. 
We denote by $\overline \psi(t)$ the solution of 
\eqref{2.1a} to \eqref{special xi}
corresponding to $\Omega(t)$ and $\zeta(t)$,
and we set
$$\xi(t)=\overline \psi(t)|_{\mathscr S(t)}$$
$$\overline \psi_0(t)=\overline \psi(t)-\lambda_{1,\ul\Om,\ul\zeta}x_2$$
$$\xi_0(t)=\overline \psi_0(t)|_{\mathscr S(t)}$$
$$\Psi_0(t)=\overline \psi_0(t)\circ \tau(t)$$
$$\Gamma(t)=[D\kappa(t)\circ \tau(t)]^T=[(D\tau(t))^{-1}]^{T}$$
($\Gamma(t)$ at $x$ is the transpose of the spatial derivative of
$\kappa$ evaluated at $\tau(t)(x)$).

Note that $\overline \psi(0)= \underline \psi$
and $\overline \psi_0(0)= \psi_0$.
Moreover the dependence of $\ol \psi(t)\circ  \tau(t)\in H^1_{per}(\ol\Om)$ 
with respect to 
$t$ is smooth,  because 
$C(\Om(t),1,0)$,
$\lambda_{1,\Om(t),\zeta(t)}$
and $\lambda_{2,\Om(t),\zeta(t)}$
are smooth in $t$,
as can be checked with the help of the formulae following
\eqref{eq: def best xi} and by arguing in the fixed domain
$\ul\Om$ (via the map $\tau(t)$) as in
\cite[after (1.14)]{BuTo}.
Then the map $t\rightarrow \sL(\Om(t),\xi(t),\zeta(t))$ reaches its minimum at $t=0$
and therefore its derivative vanishes at $t=0$. Let us compute
the derivative of the term corresponding to the kinetic energy.

First note that 
$$C(\Om(t), \xi_0(t),\zeta(t))=\mu-\lambda_{1,\ul\Om,\ul\zeta}P,~~
I(\Om(t), \xi_0(t),\zeta(t))=\nu-\lambda_{1,\ul\Om,\ul\zeta}PQ,$$ 
$$\text{det}\, D \tau(t)=1,~~
\text{det}\, D\kappa(t)=1$$
and
$$\int_{\Om(t)}\nabla \overline \psi_0(t)\cdot\nabla(\psi_0\circ\kappa(t))dx
=C(\Om(t),\xi_0(t),\zeta(t))\lambda_{2,\ul\Om,\ul\zeta}
+\int_{\Omega(t)}\zeta(t)(\psi_0\circ\kappa(t))dx$$
because $\psi_0\circ\kappa(t)|_{\{x_2=0\}}=0$,
 $\psi_0\circ\kappa(t)|_{\mathscr S(t)}=\lambda_{2,\ul \Om,\ul \zeta}$
and $\Delta\overline \psi_0(t)=-\zeta(t)$.
Hence
\begin{eqnarray*}
&&\int_{\ul \Om}\Gamma(t)\nabla \Psi_0(t)\cdot \Gamma(t)\nabla \psi_0 dx
=\int_{\Om(t)}\nabla(\Psi_0(t)\circ \kappa(t))\cdot \nabla (\psi_0
\circ\kappa(t)) dx
\\&&=
\int_{\Om(t)}\nabla \overline \psi_0(t)\cdot\nabla(\psi_0\circ\kappa(t))dx
=C(\Om(t),\xi_0(t),\zeta(t))\lambda_{2,\ul\Om,\ul\zeta}
+\int_{\Omega(t)}\zeta(t)(\psi_0\circ\kappa(t))dx
\\&&=
(\mu-\lambda_{1,\ul\Om,\ul\zeta}P)\lambda_{2,\ul\Om,\ul\zeta}
 +\int_{\ul\Om}\ul\zeta\,   \psi_0\,dx  .
\end{eqnarray*}
By differentiating with respect to $t$ at $t=0$ in the equation
$$\int_{\ul\Om} \Gamma(t)\nabla \Psi_0(t)\cdot \Gamma(t) \nabla 
\psi_0\,dx  =
(\mu-\lambda_{1,\ul\Om,\ul\zeta}P)\lambda_{2,\ul\Om,\ul\zeta}
 +\int_{\ul\Om}\ul\zeta\,   \psi_0\,dx, $$
we get
\begin{equation}
\int_{\ul \Om} \nabla \dot \Psi_0(0)\cdot \nabla\psi_0
 \, d\x d\y  +2
  \int_{\ul \Om} \nabla  \psi_0  \cdot \dot\Gamma(0) 
\nabla \psi_0 \, dx=0  .\label{lineard}
\end{equation}

Let
$$
K(t)=\half\int_{\Om(t)} |\nabla \ol\psi(t)|^2\,dx  
=\half\int_{\Om(t)} |\nabla \overline \psi_0(t)|^2\,dx  
+\nu\lambda_{1,\ul\Om,\ul\zeta}+\frac 1 2 \lambda_{1,\ul\Om,\ul\zeta}^2PQ .
$$
Then
\begin{align*}\dot K(0)& = \frac{d~}{dt} \left.\left(\half\int_{\Om(t)} |\nabla \ol\psi_0(t)|^2\,dx  \right)\right|_{t=0}
\\& = \frac{d~}{dt} \left.\left(\half\int_{\ul \Om} |\Gamma(t)\nabla \Psi_0(t)|^2\,dx  \right)\right|_{t=0}
\\&= \int_{\ul\Om} \nabla \psi_0 \cdot \dot\Gamma(0)\nabla \psi_0 dx
  +   \int_{\ul\Om}  \nabla \psi_0 \cdot \nabla \dot \Psi_0(0) dx
\\&= -\int_{\ul\Om}  \nabla \psi_0\cdot \dot\Gamma(0)\nabla \psi_0\,  dx 
 \end{align*}
by \eqref{lineard}.
Now 
$$\Gamma(t)(\x,\y) = (D\t(t)[\x,\y]^T)^{-1} = I-t D\omega[\x,\y]^T +o(t) \text{ as } t \to 0,$$
$\dot\Gamma(0)= - D\om^T$ and
$$ \dot K(0)= \int_{\ul\Om}  \nabla \psi_0
\cdot D\om\nabla \psi_0\,  dx .$$
The end of the proof is as in \cite{BuTo}.

\end{proof}

\section{Minimization}
In what follows, the H\"{o}lder exponent $\gamma$ is equal to $1/4$,
so that in particular $H^2_{loc}(\RR)\subset C^{1,\gamma}(\RR)$.

Let $\sP$ be the set of all injective $H^2_{loc}$-functions 
$p:\RR \to \RR\times (0,\infty)$ such that
$p(x+P) = p(x) + (P,0)$ for all $x$,
 $p_1(0)=0$ and  $|p'|$ is constant. The length $\ell_p$ of $p([0,P])$
is equal to
$\ell_p = \int_{0}^{P} |p'(x)|\,dx$ and thus $|p'(x)|=\ell_p/P$ everywhere. 
We shall use the notation
$$\mathscr S_p =p(\RR) \text{ and }\sS_p=p((0,P)).$$ 
For $p\in \sP$,
we shall write $p\in \sP_Q$ if there exists $\mathit \Om\in \kO$ such that
the corresponding upper boundary 
$\mathscr S$ satisfies $\mathscr S=\mathscr S_p$. We shall then write
$$\mathit \Om_p=\mathit \Om
\text{ and } \Om_p=((0,P)\times\RR)\cap \mathit \Om.$$
We supplement the definition of $\sL$ (see \eqref{eq: definition of sL} 
and \eqref{eq: definition of sE}) by setting
 $$\sL(\Om_p,\xi,\zeta)=+\infty \text { for }p \not \in  \sP_Q.$$ 
In particular
 $\sL(\Om_p,\xi,\zeta)=+\infty $ if $p\in \sP$ is such that the area
of $\Om_p$ is different from $PQ$.

Also, if $\sP_Q\ni p_i \wk p \in \sP_Q$ in  $H^{2}_{\rm loc}(\RR,\RR^2)$, then
 \begin{equation*} \ell_p = \lim_{i \to \infty} \ell_{p_i} \text{ and } \int_0^{P}|p''|^2 ds \leq \liminf_{i \to \infty} \int_0^{P}|p_i''|^2 ds .  \end{equation*}

The next lemma leads to  an explicit criterion for the free surface 
to remain away from the bottom.
\begin{lemma} For any $p \in \sP_Q$,
\begin{equation} Q  \leq   \min p_2(\RR) +\frac{P}{2\pi} 
a\left(\frac{2\pi}{P}\ell_p\right),
\label{eq: above bottom}
\end{equation}
where $2\pi a(\ell)$ (when  $\ell > 2\pi$)
 is the area enclosed between a circular  arc of  length $ \ell$ and a chord of length $2\pi$, and thus
$$\frac{P^2}{2\pi}a\left(\frac{2\pi}{P}\ell\right)$$
 is the area enclosed between a circular  arc of  length $ \ell$ and a chord of length $P$.

Moreover
\begin{equation}
\label{eq: gravity energy bounded from below}
\int_{\Om_p} \y d\x d\y \geq P Q^2/2 .
\end{equation}
\end{lemma}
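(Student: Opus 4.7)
The plan is to prove the two inequalities separately: the first by reducing it to a classical Dido-type isoperimetric statement, the second by a one-dimensional rearrangement. For \eqref{eq: above bottom}, the starting point is the identity
\[
PQ = \int_0^P p_2(x)\,p_1'(x)\,dx,
\]
obtained by applying the divergence theorem to $(0,y)$ on $\Omega_p$ (the bottom contributes $0$, and the two vertical sides of the period cell cancel by $P$-periodicity). To position the chord at the minimum height, I would then exploit the freedom in the choice of period: fix $x^{*}\in\RR$ with $p_2(x^{*})=h:=\min p_2(\RR)$ and set $q(x):=p(x+x^{*})-(p_1(x^{*}),0)$, so that $q(0)=(0,h)$, $q(P)=(P,h)$, $q$ has the same length $\ell_p$, and $q_2\geq h$ everywhere. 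Since $q$ bounds a horizontal translate of $\Omega_p$, the same identity yields $\int_0^P q_2 q_1'\,dx=PQ$, and subtracting $h\int_0^P q_1'\,dx=Ph$ gives
\[
P(Q-h) = \int_0^P\bigl(q_2(x)-h\bigr)q_1'(x)\,dx,
\]
which by Green's theorem equals the area enclosed by the Jordan curve consisting of $q$ followed by the horizontal segment from $(P,h)$ back to $(0,h)$.

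The key step is then the classical Dido isoperimetric inequality: among rectifiable simple arcs of length $\ell_p$ whose endpoints lie at distance $P$ apart on a line and that remain on one closed side of that line, the area enclosed between the arc and the chord is maximized by a circular arc, attaining the value $(P^{2}/(2\pi))\,a(2\pi\ell_p/P)$. Applying this to $q$ gives $P(Q-h)\leq(P^{2}/(2\pi))\,a(2\pi\ell_p/P)$, which is precisely \eqref{eq: above bottom}. The degenerate case $\ell_p=P$ forces $p_2\equiv Q$, so the bound holds trivially.

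For \eqref{eq: gravity energy bounded from below}, I would slice $\Omega_p$ horizontally. Setting $m(y):=\meas\{x_1\in(0,P):(x_1,y)\in\Omega_p\}$, one has $0\leq m(y)\leq P$ and $\int_0^{\infty}m(y)\,dy=PQ$, while Fubini gives $\int_{\Omega_p}y\,dx\,dy=\int_0^{\infty}y\,m(y)\,dy$. A ``push the mass downward'' rearrangement then shows this integral is minimized over admissible $m$ by $m=P\chi_{[0,Q]}$: writing $\delta(y):=m(y)-P\chi_{[0,Q]}(y)$, one has $\delta\leq 0$ on $[0,Q]$, $\delta\geq 0$ on $[Q,\infty)$, and $\int\delta\,dy=0$, so that $\int y\,\delta\,dy\geq Q\int\delta\,dy=0$, yielding $\int_{\Omega_p}y\,dx\,dy\geq PQ^{2}/2$.

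The main obstacle lies in the first part: one must invoke the Dido inequality for general rectifiable (not necessarily graph) curves, verify that the horizontal shift of the parametrisation preserves the area of the period cell, and check that $q$ is admissible in the isoperimetric comparison (injective, lying in one closed half-plane, endpoints on the chord).
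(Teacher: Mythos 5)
Your argument is correct, and it is essentially the intended one: the paper itself gives no proof here but simply cites \cite{BuTo}, where \eqref{eq: above bottom} is obtained by exactly this combination of the area identity $PQ=\int_0^P p_2p_1'\,dx$, translation of the period so that the chord sits at height $\min p_2$, and the Dido (isoperimetric) inequality, while \eqref{eq: gravity energy bounded from below} follows from the slicing/bathtub comparison with $m=P\chi_{[0,Q]}$. The only point you flag as an obstacle --- that the arc need not be a graph and may touch the chord at interior points --- is harmless, since the completion-to-a-closed-curve proof of Dido's inequality bounds the \emph{algebraic} area $\int_0^P(q_2-h)q_1'\,dx$ for an arbitrary rectifiable arc with the given endpoints and length, which is all your identity requires.
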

\begin{proof} See \cite{BuTo}.
\end{proof}

As a consequence, if $T+E>0$, then $\sL\geq gPQ^2/2$ 
with equality exactly when $\Omega_p=\Omega_Q$ and the fluid
is at rest
(see \eqref{eq: definition of sL} and \eqref{eq: definition of sE}).

The following lemma,  taken from \cite{BuTo},
provides an explicit way of ensuring that the free surface is
without double points, namely, it is sufficient to check that
inequality \eqref{eq: double point} below does not hold.
\begin{lemma}\label{lemma: injectivity}
Suppose that $p\in H^2_{loc}(\RR,\RR^2)$ 
is not injective and satisfies
$p(x+P) = p(x) + (P,0)$ for all $x$.
Then $p(\RR)$  contains
a closed loop with arc length no greater than $\ell_{p} -P$
(see \cite{HoTo}).
Let
$$p'(x) =|p'(x)| (\cos \vt(s),\sin \vt(s))
=P^{-1}\ell_p (\cos \vt(s),\sin \vt(s)),$$ 
where  $s=x \ell_p/P$ denotes arc length. 
Then, on the loop, the range of $\vt$ must exceed  $\pi$ and thus,
for some $0\leq s_2- s_1\leq\ell_{p}-P$,
\begin{align*}&\pi\leq|\vt(s_2) -\vt(s_1)| 
\leq P\ell_p^{-1}\int_{s_1P/\ell_p}^{s_2P/\ell_p}|p'' (x)|dx
\\&\leq  P\ell_p^{-1}\sqrt{P\ell_p^{-1}|s_2-s_1|}\|p''\|_{L^2(0,P)}
 \leq  \sqrt{\ell_{p} -P} \left(\frac{P}{\ell_{p}}\right)^{3/2}
\|p''\|_{L^2(0,P)} ,
\end{align*} 
hence
\begin{equation}
\label{eq: double point} 
\pi \leq  \sqrt{\ell_{p} -P} \left(\frac{P}{\ell_{p}}\right)^{3/2}
\|p''\|_{L^2(0,P)} .
\end{equation}
\end{lemma}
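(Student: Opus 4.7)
The plan is to reduce the lemma to a single estimate on the total turning of the tangent along a short closed arc, and to control this turning by the $L^2$-norm of $p''$ via the Cauchy--Schwarz inequality. The first reduction, that non-injectivity of a $(P,0)$-periodic curve $p$ produces a closed sub-loop of arc length at most $\ell_p - P$, follows from the ``reservoir'' argument in \cite{HoTo}: removing a self-intersecting sub-loop from one fundamental period leaves a path joining $(0,y_0)$ and $(P,y_0)$, whose length is necessarily at least $P$, so the removed loop has length at most $\ell_p - P$. I would briefly recall this, relying on \cite{HoTo} for the details.

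The next step is to show that on such a closed loop the range of the tangent angle $\vartheta$ must exceed $\pi$. I would argue by contradiction: if the range of $\vartheta$ on $[s_1,s_2]$ were contained in an interval of length at most $\pi$, there would exist a unit vector $w$ with $p'(s)\cdot w \geq 0$ throughout, and (by continuity, since the tangent cannot be constantly orthogonal to $w$) strictly positive on a set of positive measure. Integrating, $p(s_2)\cdot w - p(s_1)\cdot w > 0$, contradicting the assumption that the arc closes up. Hence one can choose $s_1,s_2$ in the loop with $|\vartheta(s_2)-\vartheta(s_1)| \geq \pi$ and $0 \leq s_2 - s_1 \leq \ell_p - P$.

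The main remaining task is the pointwise link between $p''$ and $\vartheta'$. Since $|p'(x)| = \ell_p/P$ is constant and $p'(x) = (\ell_p/P)(\cos\vartheta(s),\sin\vartheta(s))$ with $s = x\ell_p/P$, differentiating in $x$ gives
\[
p''(x) = (\ell_p/P)^2\,\vartheta'(s)\,(-\sin\vartheta(s),\cos\vartheta(s)),
\]
so $|\vartheta'(s)| = (P/\ell_p)^2 |p''(x)|$. Then
\[
|\vartheta(s_2)-\vartheta(s_1)| \leq \int_{s_1}^{s_2} |\vartheta'(s)|\,ds = \frac{P}{\ell_p}\int_{s_1 P/\ell_p}^{s_2 P/\ell_p} |p''(x)|\,dx,
\]
after changing variable $s = x\ell_p/P$. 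Since $s_2 - s_1 \leq \ell_p - P \leq \ell_p$, the $x$-interval has length at most $P$, so by periodicity of $p''$ and Cauchy--Schwarz one obtains
\[
\int_{s_1 P/\ell_p}^{s_2 P/\ell_p} |p''(x)|\,dx \leq \sqrt{(s_2-s_1)P/\ell_p}\,\|p''\|_{L^2(0,P)}.
\]
Combining with the lower bound $\pi$ yields \eqref{eq: double point}.

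The only delicate point is really the change-of-variables bookkeeping between the $H^2_{loc}$-parameter $x$ (on which $\|p''\|_{L^2(0,P)}$ is defined) and arc length $s$; once one commits to a single convention, everything falls into line. The topological input about the tangent angle rotating by more than $\pi$ on a closed loop is essentially elementary, and extracting a short loop from a self-intersection in a periodic curve is the one genuinely geometric ingredient, for which citing \cite{HoTo} seems the cleanest way to proceed.
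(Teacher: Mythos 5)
Your proposal is correct and follows essentially the same route as the paper, which presents this lemma with its proof embedded in the statement: the short loop comes from \cite{HoTo}, the turning of the tangent angle on a closed loop exceeds $\pi$, and the chain of inequalities follows from the identity $|\vt'(s)|=(P/\ell_p)^2|p''(x)|$ together with the change of variables and the Cauchy--Schwarz inequality. Your halfplane argument for the turning bound and the bookkeeping between the parameter $x$ and arc length $s$ are exactly the details the paper leaves implicit.
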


Let
$$W=\{(\Omega,\xi,\zeta): ~
p\in \sP_Q,~
\Om=\Om_p\in \kO,~
 \xi\in  H^{1/2}_{per}(\mathscr S_p),~ 
 \zeta \in L^2(\Omega)\},$$
\begin{multline*}
V:=\{(\Omega,\xi,\zeta)\in W: ~
 \zeta \in \overline{\sR(\Omega)}^w,~
 C(\Omega,\xi,\zeta)=\mu,~
I(\Omega,\xi,\zeta)=\nu\}.
\end{multline*}

By \eqref{eq: definition of sL}, \eqref{eq: definition of sE} and
\eqref{eq: gravity energy bounded from below},
if $T>0$
there is a bounded subset of $(0,P)\times(0,\infty)$
that contains all domains $\Omega$ such that, for some $\xi$
and $\zeta$, 
$(\Omega,\xi,\zeta)\in W$ and
$\sL(\Omega,\xi,\zeta)<\inf_V\sL+1$; hence
\begin{equation}
\label{eq: bound on Omega}
\exists R>0~ \forall (\Omega,\xi,\zeta)\in W~~
\Big( \sL(\Omega,\xi,\zeta)<\inf_V\sL+1\Rightarrow
\overline{\Omega}\subset [0,P]\times [0,R)  \Big).
\end{equation}

Let
$$\sR=\{\zeta\in L^2((0,P)\times(0,R)): ~\zeta\text{ is a rearrangement of }\zeta_Q\}$$
and $\overline{\sR}^w$ be its weak closure in $L^2((0,P)\times(0,R))$.

Hypothesis 
(M2) in the following existence result is related to the
various inequalities arising in the two previous lemmata.

\begin{theorem}
\label{thm: existence minimizer}
Assume that
\begin{itemize}
\item[(M1)]
$V$  does not contain any $(\Om,\xi,\zeta)$ with $\Om=\Om_Q$,
\item[(M2)] there exist $\sL_0>\inf_V \sL$,
$T>0$, $\beta\geq 1$
and $E>0$ such that
\begin{equation}\label{eq: choice T beta}
\frac{P}{2\pi}a\left(
\frac{2\pi}{P}\left\{\frac{\sL_0-\frac g 2 PQ^2}{T}\right\}^{1/\beta}
+2\pi\right)<Q,
\end{equation}
and
\begin{equation}\label{eq: choice E}
(\sL_0-\frac g 2 PQ^2)
\left\{\frac{\sL_0-\frac g 2 PQ^2}{T}\right\}^{1/\beta}
< E\pi^2
\end{equation}
(see \eqref{eq: definition of sE} for the meaning of $T$, $\beta$ and $E$).
\end{itemize}
Then $\inf_V\sL$ is attained.
\end{theorem}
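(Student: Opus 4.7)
The plan is a direct method. I would fix a minimizing sequence $(\Om_{p_n},\xi_n,\zeta_n)\in V$ with $\sL(\Om_{p_n},\xi_n,\zeta_n)\to \inf_V\sL$, and use (M2) to arrange $\sL(\Om_{p_n},\xi_n,\zeta_n)<\sL_0$ for every $n$. Combining this with \eqref{eq: gravity energy bounded from below} gives $\sE(\sS_{p_n})\le \sL_0-\tfrac{g}{2}PQ^2$, whence
\begin{equation*}
(\ell_{p_n}-P)^{\beta}\le \frac{\sL_0-\frac{g}{2}PQ^2}{T}
\quad\text{and}\quad
\int_0^{\ell_{p_n}}|\sigma_n|^2\,ds\le \frac{\sL_0-\frac{g}{2}PQ^2}{E}.
\end{equation*}
Since $|p_n'|=\ell_{p_n}/P$ is constant, the second inequality produces a uniform $H^2(0,P)$ bound on $p_n$. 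Inequality \eqref{eq: choice T beta} with \eqref{eq: above bottom} forces $\min p_{n,2}(\RR)>0$ uniformly, so the free surfaces stay a definite distance above the bottom; and \eqref{eq: choice E} forces the failure of \eqref{eq: double point}, so by Lemma~\ref{lemma: injectivity} injectivity is preserved in any limit.

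Passing to a subsequence, $p_n\wk p$ in $H^2_{loc}(\RR,\RR^2)$ and $p_n\to p$ in $C^1_{loc}$. The limit $p$ is periodic, injective, has $|p'|$ constant, and satisfies $|\Om_p|=\lim_n|\Om_{p_n}|=PQ$, so $p\in \sP_Q$ and $\Om:=\Om_p\in \kO$. For the vorticity I extend each $\zeta_n$ by zero outside $\Om_{p_n}$ to the fixed reference box $B=(0,P)\times(0,R)$ given by \eqref{eq: bound on Omega}. The extensions lie in $\ol{\sR}^w$, so a further subsequence converges weakly in $L^2(B)$ to some $\widetilde\zeta\in \ol{\sR}^w$. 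The $C^1$ convergence of the surfaces implies that $\Om_{p_n}\to \Om$ in the Hausdorff sense and that their characteristic functions converge in $L^1(B)$; testing $\widetilde\zeta$ against smooth functions supported off $\overline\Om$ shows $\widetilde\zeta$ is supported in $\overline\Om$, and approximating $L^2(\Om)$ test functions by test functions on $B$ yields $\zeta:=\widetilde\zeta|_{\Om}\in \ol{\sR(\Om)}^w$.

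The stream functions $\psi_n$ have $L^2$-bounded gradients from the kinetic-energy bound. Pulling them back to a fixed rectangle via the conformal maps of \eqref{eq: widetilde psi}, whose widths $\widetilde P_n=C(\Om_{p_n},1,0)$ converge to a positive limit, gives uniform $H^1$ bounds; a weak $H^1$ limit pushes forward to some $\psi\in H^1_{per}(\Om)$ solving \eqref{fbvp}(a--d) with trace $\xi\in H^{1/2}_{per}(\sS_p)$. The weak formulations of $C$ and $I$ stated after \eqref{fbvp} use fixed test functions such as $\widehat\psi$ and $x_2$, hence they are continuous under weak convergence of $\nabla\psi_n$ and $\zeta_n$ (in pull-back coordinates). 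Therefore $C(\Om,\xi,\zeta)=\mu$, $I(\Om,\xi,\zeta)=\nu$, and $(\Om,\xi,\zeta)\in V$; assumption (M1) then excludes $\Om=\Om_Q$.

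Lower semicontinuity of $\sL$ now follows term by term: the kinetic energy from weak $L^2$ lower semicontinuity after pullback, the gravitational energy from $L^1$ convergence of the characteristic functions of the domains, the stretching term from the continuity $\ell_{p_n}\to \ell_p$, and the bending term from weak $H^2$ lower semicontinuity of $p_n''$. Consequently
\begin{equation*}
\sL(\Om,\xi,\zeta)\le \liminf_n \sL(\Om_{p_n},\xi_n,\zeta_n)=\inf_V\sL,
\end{equation*}
so the infimum is attained. The main obstacle is that both the admissible class $\ol{\sR(\Om_{p_n})}^w$ and the stream functions live on $n$-dependent domains; the fix is to transport everything to fixed reference sets (the box $B$ for the vorticity, the strip $(0,\widetilde P_n)\times(0,1)$ for the stream function), identify limits there, and then push forward, with the delicate point being to verify that the weak-limit support of the vorticity really does sit inside $\Om$.
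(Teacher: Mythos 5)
Your skeleton is the same direct method the paper uses (it routes Theorem~\ref{thm: existence minimizer} through the more detailed Theorem~\ref{thm: minimization}): the surface energy and (M2) give a uniform $H^2_{per}$ bound on the parametrisations, keep the surfaces away from the bottom via \eqref{eq: above bottom} and \eqref{eq: choice T beta}, and preserve injectivity in the limit via Lemma~\ref{lemma: injectivity} and \eqref{eq: choice E}; the vorticities are extended by zero to the fixed box and a weak $L^2$ limit is extracted; the energy is lower semicontinuous term by term. Two of your steps, however, are asserted with justifications that would not actually prove them.

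First, the claim $\zeta:=\widetilde\zeta|_{\Om}\in\ol{\sR(\Om)}^w$. Testing against functions supported off $\overline\Om$ only shows that $\widetilde\zeta$ is supported in $\overline\Om$; ``approximating $L^2(\Om)$ test functions by test functions on $B$'' cannot upgrade this to membership in the weak closure of the rearrangement class \emph{on $\Om$}, because a priori $\widetilde\zeta$ is only a weak limit of rearrangements supported in the moving domains $\Om_{p_n}$, not in $\Om$. The paper closes this with the Burton--McLeod characterisation of $\ol{\sR(\Om)}^w$ by decreasing rearrangements and the order $\prec$ (see the footnote in the proof of Theorem~\ref{thm: minimization} and \cite{BuMc}); you need that argument or an equivalent. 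Second, the treatment of the stream functions is exactly where the paper's real work lies, and your version glosses it. Pulling back by the conformal maps of \eqref{eq: widetilde psi} requires proving that the moduli $\widetilde P_n=C(\Om_{p_n},1,0)$ \emph{and the maps themselves} converge, which is nontrivial and not established; moreover the test functions $\widehat\psi$ and $x_2$ become $n$-dependent in pull-back coordinates, so ``fixed test functions'' does not justify passing $C$ and $I$ to the limit there. The paper instead replaces $\xi_n$ by the optimal boundary data of Proposition~\ref{prop: fixed omega and zeta}, extends $\ol\psi_n$ outside $\Om_{p_n}$ by $\ol\lambda_{1,n}x_2+\ol\lambda_{2,n}$, and must show these multipliers are bounded --- this is precisely where (M1) acts (unbounded $\ol\lambda_{1,n}$ forces the limiting domain to be $\Om_Q$ and produces an element of $V$ with $\Om=\Om_Q$). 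Your proof invokes (M1) only at the very end ``to exclude $\Om=\Om_Q$'', which indicates the role of that hypothesis has not been located. A workable repair of your route, staying in physical coordinates: the kinetic-energy bound plus the Poincar\'e inequality from the flat bottom gives weak $H^1$ compactness on every compact $K\subset\subset\Om$ (contained in $\Om_{p_n}$ for large $n$); the constraints then pass to the limit using a fixed $\widehat\psi$ with $\nabla\widehat\psi$ supported in such a $K$, together with the boundary-layer estimate $\bigl|\int_{\Om_{p_n}\setminus K}\partial_2\psi_n\,dx\bigr|\leq\|\nabla\psi_n\|_{L^2(\Om_{p_n})}\,|\Om_{p_n}\setminus K|^{1/2}$.
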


{\bf Remarks} 

1) If we allow $\sL_0=\inf_V \sL$ in (M2) or 
require $\sL_0=\inf_V\sL$,
we do not change the meaning of (M2); however
$\sL_0>\inf_V \sL$ will be used in the proof of
Theorem \ref{thm: minimization}.
Note that, by Lemma \ref{lemma: on lambda_i},
$V\neq \emptyset$.

2) Assumption 
(M1) holds 
if $\zeta_Q$ is essentially one-signed and not trivial,
and  $(\nu-Q\mu)\zeta_Q\leq 0$ a.e.
(or $\nu-Q\mu\neq 0$ if $\zeta_Q$ vanishes a.e.).
See the paragraph ``A way of avoiding parallel flows'' in the introduction.

3) To see that all assumptions can be fulfilled, choose 
any $T>0$, $\beta\geq 1$ and $E>0$, and then
choose $\sL_0>\frac g 2 PQ^2$ near enough to 
$\frac g 2 PQ^2$ so that \eqref{eq: choice T beta} and \eqref{eq: choice E}
hold (this is possible because
$a(s)\rightarrow 0$ as $s\rightarrow 2\pi$ from the right).
Choose $p\in\sP_Q$ near enough to $(0,Q)$ in $H^2_{loc}$ and such that 
$\Om_p\neq \Om_Q$.
We know that 
$$I(\Om_p,1,0)-QC(\Om_p,1,0)=P-QC(\Om_p,1,0)<0$$
(see \eqref{eq: non-degeneracy}).
Choose
$\zeta_Q$ essentially non-negative and small enough in $L^2(\Om_Q)$,
and $\zeta\in\sR(\Om_p)$ such that
$I(\Om_p,1,\zeta)-QC(\Om_p,1,\zeta)<0$.
For  $\epsilon>0$, we have
$I(\Om_p,\epsilon,\epsilon\zeta)-QC(\Om_p,\epsilon,\epsilon\zeta)<0$.
 We then set
$\mu_\epsilon=C(\Om_p,\epsilon,\epsilon\zeta)$ and $\nu_\epsilon=I(\Om_p,
\epsilon,\epsilon\zeta)$. 
For $V_\epsilon$ corresponding  $\epsilon \zeta_Q$, $\mu_\epsilon$ and 
$\nu_\epsilon$, we get that
$(\Om_p,\epsilon,\epsilon\zeta)\in V_{\epsilon}$ and,
if $p-(0,Q)\in H^2_{loc}$ 
and $\epsilon$ are small enough,  that $\inf_{V_\epsilon}\sL<\sL_0$.

4) For the above choice of $V_\epsilon$, the minimizer  turns out to be
near $\Omega_Q$ (as $p-(0,Q)$ and $\epsilon>0$ above are chosen small enough).
However in order to check (M2)
for general $T>0$, $\beta\geq 1$, $E>0$
$\zeta_Q$, $\mu$ and $\nu$,  it is enough to
exhibit explicitly an appropriate $(\Om,\xi,\zeta)\in V$.
The observation that
the free surface of $\Omega$ need not be a graph (but must not touch 
or intersect itself) was intended to help addressing this question,
for example by numerical simulations.

The previous theorem is an immediate consequence of the following one.
For convenience write $\ol\psi(p,\zeta,\widetilde \mu,\widetilde \nu)$ 
for the solution to \eqref{2.1a}-\eqref{special xi}
corresponding to
the domain $\Om_{ p}\neq \Om_Q$ (with $p\in\sP_Q$),
the vorticity function $\zeta$, circulation $\widetilde \mu$
and horizontal impulse $\widetilde \nu$,
 and write
$\ol\xi(p,\zeta,\widetilde \mu,\widetilde \nu)
=\ol\psi(p,\zeta,\widetilde \mu,\widetilde \nu)|_{\mathscr S_p}$.
Moreover we write $\ol\lambda_1(p,\zeta,\widetilde \mu,\widetilde \nu)$
and $\ol\lambda_2(p,\zeta,\widetilde \mu,\widetilde \nu)$
for the corresponding $\lambda_1$ and $\lambda_2$ given by Lemma
\ref{lemma: on lambda_i} applied to
$\Om_{ p}\neq \Om_Q$,
$\zeta$, $\widetilde \mu$ and
$\widetilde \nu$.
\begin{theorem}
\label{thm: minimization}
As in Theorem \ref{thm: existence minimizer},
assume (M1) and (M2).
For each $k\in \NN$, let
$p_k \in\sP_Q$  with $\Omega_{p_k}\neq \Omega_Q$, 
 $\zeta_k \in L^2(\Om_{p_k})\subset L^2((0,P)\times(0,\infty))$ 
and $\mu_k,\nu_k\in \RR$.
Suppose  that
 $$\text{dist}_{L^2((0,P)\times (0,\infty))}
\left(\zeta_k,\ol{\sR}^w\right)\rightarrow 0 ,$$ 
$$\lim_{k\rightarrow \infty} \mu_k=\mu,~~
\lim_{k\rightarrow \infty} \nu_k=\nu$$
and
\begin{multline}
\label{eq: liminf less}
\limsup_{k\rightarrow \infty}\sL(\Om_{p_k},\ol \xi(p_k,\zeta_k,
\mu_k,\nu_k),\zeta_k)=
\\
\limsup_{k\rightarrow \infty}
\Big\{
\frac 1 2 \int_{\Om_{p_k}}|\nabla \ol \psi(p_k,\zeta_k,\mu_k,\nu_k)|^2
\,dx
+ g \int_{\Om_{p_k}} x_2\,dx + T (\ell_{p_k}-P)^\beta 
\\+E\left(\frac{P}{\ell_{p_k}}\right)^3 \int_{0}^{P}|p_k''(x)|^2 dx
\Big\}\leq \inf_V \sL.
\end{multline}
In particular these hypotheses hold true if
$\{(\Om_{p_k},\ol\xi(p_k,\zeta_k,\mu_k,\nu_k),
\zeta_k)\}_{k\in\NN}$ is a minimizing sequence in $V$ of $\sL$ 
(and thus $\mu_k=\mu$ and $\nu_k=\nu$ for all $k$).

Then there is a sequence $\{k_j\}\subset \NN$ such that
$\{p_{k_j}\}$ converges weakly in $H^2_{per}$ to some $p\in \sP_Q$ and
$\{\zeta_{k_j}\}$ seen in $L^2((0,P)\times(0,\infty))$ 
converges weakly to some $\zeta\in L^2(\Om_p)$.
Moreover $L^2((0,P)\times(0,\infty))$
can be seen as a subspace of the dual space
$(H^1((0,P)\times(0,\infty))\big)'$ of $H^1((0,P)\times(0,\infty))$ and
\begin{equation}
\label{eq: limit in H1'}
\zeta_{k_j}\rightarrow \zeta \text{ strongly in } 
\big(H^1((0,P)\times(0,\infty))\big)'.
\end{equation}

Since $\Om_p\in\kO$,  there
exists a $C^{1,\gamma}$-map $F:\RR^2\rightarrow \RR^2$ 
satisfying the following
properties:
\begin{itemize}
\item $F$ restricted to $\RR\times [0,Q]$ is a diffeomorphism
from $\RR\times [0,Q]$ onto $\overline{\mathit \Om_p}$,
\item $F(x_1,0)=(x_1,0)$ for all $x_1\in\RR$,
\item $F$ 
 restricted to $\RR\times\{Q\}$ is a homeomorphism from
$\RR\times \{Q\}$ onto $\mathscr S_p$,
\item $F(x_1+P,x_2)
=(F_{1}(x_1,x_2)+P,F_{2}(x_1,x_2))$ for all $x=(x_1,x_2)\in \RR\times[0,Q]$.
\end{itemize}
In the same way as for $F$, we introduce $F_{j}:\RR^2\rightarrow \RR^2$
such that, restricted to $\RR\times [0,Q]$, it is a diffeomorphism
from $\RR\times [0,Q]$ onto $\overline{\mathit \Om_{p_{k_j}}}$.

Then this can be done in such a way that
\begin{equation}\label{eq: lim unif 1}
||F_{j}-F||_{C^1(\sU)}\rightarrow 0\text{ for some open set $\sU$
containing $\ol{\mathit \Om_Q}$},
\end{equation}
\begin{equation}\label{eq: lim unif 2}
\ol\lambda_{1}(p_{k_j},\zeta_{k_j},\mu_{k_j},\nu_{k_j})\rightarrow 
\ol\lambda_{1}(p,\zeta,\mu,\nu),~~
\ol\lambda_{2}(p_{k_j},\zeta_{k_j},\mu_{k_j},\nu_{k_j})\rightarrow 
\ol\lambda_{2}(p,\zeta,\mu,\nu)
\end{equation}
and
\begin{equation}\label{eq: lim unif 3}
||\ol \psi(p_{k_j},\zeta_{k_j},\mu_{k_j},\nu_{k_j})-\ol\psi(p,\eta,
\mu,\nu)||_{H^1_{per}((0,P)\times (0,R))}
\rightarrow 0,
\end{equation}
where $R$ is large enough so that the closures of $\Om_p$ and all 
$\Om_{p_k}$ 
are subsets of $[0,P]\times [0,R)$ 
(see \eqref{eq: bound on Omega})
and where $\ol\psi(p,\zeta,\mu,\nu)$ 
and all $\ol \psi(p_k,\zeta_k,\mu_k,\nu_k)$ have been
extended in $(0,P)\times (0,R)$ by 
$\ol\lambda_{1}(p,\zeta,\mu,\nu) x_2+
\ol\lambda_{2}(p,\zeta,\mu,\nu)$ and
$\ol\lambda_{1}(p_k,\zeta_k,\mu_k,\nu_k) x_2+\ol\lambda_{2}
(p_k,\zeta_k,\mu_k,\nu_k)$.

Finally $(\Omega_p,\ol\xi(p,\zeta,\mu,\nu),\zeta)\in V$,
$\sL(\Omega_p,\ol\xi(p,\zeta,\mu,\nu),\zeta)=\inf_V \sL$,
the limsup in \eqref{eq: liminf less} is a limit:
\begin{equation}
\label{eq: limsup is limit}
\lim_{k\rightarrow \infty}\sL(\Om_{p_k},\ol \xi(p_k,\zeta_k,\mu_k,\nu_k),\zeta_k)=\inf_V \sL
\end{equation}
and
\begin{equation}\label{eq: strong convergence}
p_{k_j}\rightarrow p ~\text{ strongly in }~H^2_{per}~.
\end{equation}

\end{theorem}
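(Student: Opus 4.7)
The plan is to extract a subsequence converging weakly in $H^2\times L^2$, verify that the limit $(p,\zeta)$ lies in the feasible set (the main obstacle, where hypothesis (M2) is used sharply), promote this to strong $H^1$-convergence of the stream functions by transferring the elliptic problem to the fixed reference domain $\Om_Q$, and close via weak lower semicontinuity of $\sL$; minimality then forces equality and, in particular, promotes $p_{k_j}\rhu p$ in $H^2$ to strong convergence.

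Combining \eqref{eq: gravity energy bounded from below} with \eqref{eq: liminf less} yields, for all large $k$,
$$T(\ell_{p_k}-P)^\beta + E(P/\ell_{p_k})^3\int_0^P|p_k''|^2\,dx \leq \sL_0-\tfrac{g}{2}PQ^2,$$
which bounds $\ell_{p_k}$ and then $\|p_k''\|_{L^2(0,P)}$. With $p_k(x+P)=p_k(x)+(P,0)$ and $p_{k,1}(0)=0$ this gives a uniform $H^2_{\rm loc}$-bound on $p_k-(\cdot,0)$, whence a subsequence $p_{k_j}\rhu p$ in $H^2_{\rm loc}$ with $p_{k_j}\to p$ in $C^1_{\rm loc}$ by Rellich, so $\ell_{p_{k_j}}\to\ell_p$. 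The family $\{\zeta_k\}$ is $L^2$-bounded (within $o(1)$ of the bounded set $\ol{\sR}^w$), giving $\zeta_{k_j}\rhu\zeta$ in $L^2((0,P)\times(0,R))$; \eqref{eq: limit in H1'} follows from compactness of $L^2\hookrightarrow (H^1)'$ on bounded domains. To verify $p\in\sP_Q$, pass to the limit in the energy inequality above to obtain $(\ell_p-P)^\beta\leq (\sL_0-gPQ^2/2)/T$ and $\|p''\|_{L^2}^2\leq (\ell_p/P)^3(\sL_0-gPQ^2/2)/E$; the strict bound \eqref{eq: choice E} then rules out the injectivity-failing inequality \eqref{eq: double point}, so Lemma~\ref{lemma: injectivity} forces $p$ injective, while \eqref{eq: choice T beta} combined with \eqref{eq: above bottom} in the limit yields $\min p_2>0$. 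Continuity of the enclosed area under $C^1$-convergence gives $|\Om_p|=PQ$, so $p\in\sP_Q$ and $\Om_p\in\kO$. Since $\chi_{\Om_{p_{k_j}}}\to\chi_{\Om_p}$ a.e., $\zeta$ is supported in $\Om_p$ and lies in the weakly closed convex set $\ol{\sR(\Om_p)}^w$.

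For the elliptic transfer, let $F(x_1,x_2)$ interpolate linearly in $x_2$ between $(x_1,0)$ and $p(x_1)$ on $\RR\times[0,Q]$ and extend smoothly elsewhere, defining $F_j$ analogously from $p_{k_j}$; uniform $C^1$-convergence of $p_{k_j}$ gives \eqref{eq: lim unif 1}. The pullback $\Psi_j:=\ol\psi(p_{k_j},\zeta_{k_j},\mu_{k_j},\nu_{k_j})\circ F_j\in H^1_{\rm per}(\Om_Q)$ solves a uniformly elliptic problem on the fixed domain $\Om_Q$ with coefficients converging in $C^0$ and right-hand side converging strongly in $H^{-1}$ by \eqref{eq: limit in H1'}, subject to the two scalar side conditions $C=\mu_{k_j}$, $I=\nu_{k_j}$ which pass continuously to the limit. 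Elliptic estimates together with the uniqueness in Lemma~\ref{lemma: on lambda_i} give $\Psi_j\to\Psi$ strongly in $H^1(\Om_Q)$, and the explicit formulae for $\ol\lambda_1,\ol\lambda_2$ derived in the proof of Proposition~\ref{prop: fixed omega and zeta} yield \eqref{eq: lim unif 2}; pushing forward and using the announced extension gives \eqref{eq: lim unif 3}.

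The constraints $C,I$ pass to the limit by the preceding step, so $(\Om_p,\ol\xi(p,\zeta,\mu,\nu),\zeta)\in V$. Among the four terms of $\sL$, the kinetic, gravitational and stretching contributions converge (by \eqref{eq: lim unif 3}, by dominated convergence, and by $C^1$-convergence of $p_{k_j}$, respectively), while the bending term $E(P/\ell_p)^3\|p''\|_{L^2}^2$ is only weakly lower semicontinuous. Hence
$$\sL(\Om_p,\ol\xi,\zeta)\leq \liminf_{j\to\infty}\sL(\Om_{p_{k_j}},\ol\xi_{k_j},\zeta_{k_j})\leq \inf_V\sL,$$
and since the left side is an admissible value of $\sL$ on $V$, equality must hold throughout. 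The $\liminf$ on the bending term is then a genuine limit, which together with weak $H^2$-convergence yields the strong convergence \eqref{eq: strong convergence}; \eqref{eq: limsup is limit} follows from term-by-term convergence.
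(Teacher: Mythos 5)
Your overall architecture (subsequence extraction, feasibility of the limit via (M2), transfer to the fixed domain, lower semicontinuity plus minimality) matches the paper's, but there is a genuine gap at the centre of the argument: you never prove that $\Om_p\neq\Om_Q$, and you never use hypothesis (M1). This is not a formality. The conclusion of the theorem involves $\ol\psi(p,\zeta,\mu,\nu)$ and $\ol\lambda_i(p,\zeta,\mu,\nu)$, which are only defined (Lemma \ref{lemma: on lambda_i}, Proposition \ref{prop: fixed omega and zeta}) when $\Om_p\neq\Om_Q$; and your elliptic-transfer step silently assumes uniform solvability of the two-constraint problem along the sequence. The linear system determining $(\ol\lambda_{1,k},\ol\lambda_{2,k})$ has determinant proportional to $P^2-PQ\,C(\Om_{p_k},1,0)$, which by \eqref{eq: non-degeneracy} vanishes exactly at $\Om_Q$. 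If $\Om_{p_{k_j}}\to\Om_Q$ this system degenerates, $\{\ol\lambda_{1,k_j}\}$ may blow up, the extensions of $\ol\psi_{k_j}$ above the free surfaces by $\ol\lambda_{1,k_j}x_2+\ol\lambda_{2,k_j}$ are not bounded in $H^1((0,P)\times(0,R))$, and \eqref{eq: lim unif 2}--\eqref{eq: lim unif 3} fail as stated. So the assertion that ``elliptic estimates together with the uniqueness in Lemma \ref{lemma: on lambda_i} give $\Psi_j\to\Psi$ strongly in $H^1$'' begs the question: the needed a priori bound is exactly what is in doubt.

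The paper's proof handles this by a dichotomy that you would need to reproduce. The energy bound gives $\sup_j\int_{\Om_{p_{k_j}}}|\nabla\ol\psi_{k_j}|^2<\infty$, hence (Poincar\'e, zero bottom trace) an $L^2$ bound on the top traces $\ol\lambda_{1,k_j}F_{j2}(\cdot,Q)+\ol\lambda_{2,k_j}$. If $\{\ol\lambda_{1,k_j}\}$ is unbounded, dividing by $\ol\lambda_{1,k_j}$ forces $F_2(\cdot,Q)\equiv Q$, i.e.\ $\Om_p=\Om_Q$; one then passes to the limit on truncated strips $(0,P)\times(0,\widetilde Q)$ (where the $\ol\psi_{k_j}$ are still bounded in $H^1$) to produce $(\Om_Q,\xi,\zeta)\in V$, contradicting (M1). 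Only after this is $\{\ol\lambda_{1,k_j}\}$ known to be bounded, and only then do the stream functions converge weakly on the full strip and the constraints pass to the limit. A secondary remark: your claim that the kinetic term ``converges by \eqref{eq: lim unif 3}'' is circular as written, since \eqref{eq: lim unif 3} is itself a conclusion; in the paper one first gets $\int_{\Om_p}|\nabla\psi|^2\leq\liminf\int_{\Om_{p_{k_j}}}|\nabla\ol\psi_{k_j}|^2$ by weak lower semicontinuity on slightly enlarged domains, and upgrades to equality (and then to \eqref{eq: lim unif 3}) only after minimality is established.
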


\begin{proof}
Let $p_k \in \sP_Q$,
 $\zeta_k\in  L^2(\Om_{p_k}) $ and $\mu_k,\nu_k\in\RR$ be such that
 $$\text{dist}_{L^2((0,P)\times (0,\infty))}
\left(\zeta_k,\ol{\sR}^w\right)\rightarrow 0, $$ 
$$\lim_{k\rightarrow \infty} \mu_k=\mu,
~~\lim_{k\rightarrow \infty} \nu_k=\nu$$
and
\begin{multline*}
\limsup_{k\rightarrow \infty}\Big\{
\frac 1 2 \int_{\Om_{p_k}}|\nabla \ol 
\psi(p_k,\zeta_k,\mu_k,\nu_k)|^2\,dx
+ g \int_{\Om_{p_k}} x_2\,dx + T (\ell_{p_k}-P)^\beta 
\\+E\left(\frac{P}{\ell_{p_k}}\right)^3 \int_{0}^{P}|p_k''(x)|^2 dx
\Big\}\leq
\inf_V \sL  ,
\end{multline*}

For simplicity, we set
$$\ol\psi_k=\ol\psi(p_k,\zeta_k,\mu_k,\nu_k),~~
\ol\xi_k=\ol\xi(p_k,\zeta_k,\mu_k,\nu_k)$$
$$\ol\lambda_{1,k}=\ol\lambda_1(p_k,\zeta_k,\mu_k,\nu_k)
~\text{ and }~
\ol\lambda_{2,k}=\ol\lambda_2(p_k,\zeta_k,\mu_k,\nu_k)$$
(remember that  we write $\ol\lambda_{1}(p_k,\zeta_k,\mu_k,\nu_k)$
and $\ol\lambda_2(p_k,\zeta_k,\mu_k,\nu_k)$
for the corresponding $\lambda_1$ and $\lambda_2$ given by Lemma
\ref{lemma: on lambda_i} applied to
$\Om_{ p_k}\neq \Om_Q$).

We get, for all $k\in\NN$ large enough,
\begin{equation}
\label{eq: energy inequality}
\begin{array}{l}
\frac 1 2 \int_{\Om_{p_k}}|\nabla \ol \psi_k|^2\,dx
+ \frac g 2 PQ^2  
\\+ T (\ell_{p_k}-P)^\beta 
+E\left(\frac{P}{\ell_{p_k}}\right)^3 \int_{0}^{P}|p_k''(x)|^2 dx
\stackrel{\eqref{eq: gravity energy bounded from below}}{\leq}
\sL(\Om_{p_k},\ol\xi_k,\zeta_k)\leq \sL_0~,
\end{array}\end{equation}
\begin{equation}
\label{eq: intermediate}
\ell_{p_k}-P\stackrel{\eqref{eq: energy inequality}}
{\leq} \left\{\frac{\sL_0-\frac g 2 PQ^2}{T}\right\}^{1/\beta}~,
\end{equation}
\begin{equation}\label{eq: bounded from below}
\frac{P}{2\pi}a\left(\frac{2\pi}{P}\ell_{p_k}\right)\stackrel{
\eqref{eq: choice T beta}}{<}Q
,~~\min p_{k,2}(\RR)\stackrel{\eqref{eq: above bottom}}{>}0,
\end{equation}
\begin{equation}\label{eq: uniform estimate}
(\ell_{p_k}-P)\left(\frac{P}{\ell_{p_k}}\right)^3||p_k''||^2_{L^2(0,P)}
\\\stackrel{\eqref{eq: intermediate},\eqref{eq: choice E}
}{<}
\pi^2\frac{E \left(\frac{P}{\ell_{p_k}}\right)^3||p_k''||^2_{L^2(0,P)}}
{\sL_0-\frac g 2 PQ^2}\stackrel{
\eqref{eq: energy inequality}
}{\leq} \pi^2
\end{equation}
uniformly in $k$ large enough.
Observe that
$\{ p_k \}$ is bounded in $H^2_{per}$
because 
$\sL(\Om_{p_{k}},\ol\xi_{k} ,\zeta_{k})$ is  bounded and $T,E>0$.
So there is a sequence $\{k_j\}\subset \NN$ such that
$\{p_{k_j}\}$ converges weakly in $H^2_{per}$ to some $p$ and
$\{\zeta_{k_j}\}$ seen in $L^2((0,P)\times(0,\infty))$ 
converges weakly to some $\zeta\in L^2(\Om_p)$.

Remember the constant $R>0$ introduced in 
\eqref{eq: bound on Omega}. As in fact
$\{\zeta_{k_j}\}\subset L^2((0,P)\times(0,R))$ and as
 the inclusion map
$L^2((0,P)\times(0,R))\subset \big(H^1((0,P)\times(0,R))\big)'$
is compact, we get \eqref{eq: limit in H1'}.

By lemma \ref{lemma: injectivity} and \eqref{eq: uniform estimate},
$p$ is injective and, by
\eqref{eq: bounded from below}, $p(\RR)\subset \RR\times (0,\infty)$.
Hence $p\in \sP_Q$
(see \cite{BuTo}).

Let $F$ be as in the statement.  Then $F$ restricted to some
open neighbourhood $\sU$ of
$\RR\times[0,Q]$ is still a diffeomorphism onto the open set $F(\sU)$
containing $\ol{\mathit \Om_p}$. 
As a consequence, for large enough
$j$, $\mathit \Om_{p_{k_j}}\subset F(\sU)$ and $F^{-1}(\mathscr S_{p_{k_j}})$
is the graph of a map $x_1\rightarrow H_j(x_1)$
that is $C^1$-close to the constant map $x_1\rightarrow x_2= Q$.
Define 
$$G_j(x_1,x_2)=(x_1,x_2\, H_j(x_1)/Q) \text{ and } F_j=(F_{j1},F_{j2})
= F \circ G_j
\text{ for all }j.$$
Then 
\eqref{eq: lim unif 1} holds.
Extend $\ol \psi_k$ 
on $(0,P)\times (0,R)$, as in the statement.
Observe that
$$\sup_{j\in\NN}\int_{\Om_{p_{k_j}}}
|\nabla \ol \psi_{k_j}|^2dx<\infty$$  because 
$\sL(\Om_{p_{k_j}},\ol\xi_{k_j} ,\zeta_{k_j})$ is finite.
Hence we get successively
$$\sup_{j\in\NN}\int_{\Om_{Q}}
|\nabla \left(\ol \psi_{k_j}
\circ F_j\right)|^2dx<\infty,$$
$$\sup_{j\in\NN}||
\ol \psi_{k_j}\circ F_j||_{H^1(\Om_Q)}<\infty$$
by Poincar\'e's inequality,
$$\sup_{j\in\NN}\int_0^P
\Big|\ol \psi_{k_j}\circ F_j\big|_{x_2=Q}\Big|^2dx_1<\infty,$$
or, equivalently,
$$\sup_{j\in\NN}\int_0^P|\ol\lambda_{1,k_{j}}
F_{j2}(x_1,Q)
+\ol\lambda_{2,k_{j}}|^2
dx_1<\infty.$$

Suppose first that 
$\{\ol\lambda_{1,k_j}\}$ is unbounded.
Taking a subsequence if necessary,
$F_{j2}(\cdot,Q)+(\ol\lambda_{2,k_j}/
\ol\lambda_{1,k_j})$  
would converge to $0$ in $L^2(0,P)$, and therefore
$F_{2}(x_1,Q)=Q$ for all $x_1\in (0,P)$
(this follows from \eqref{eq: lim unif 1}).
Hence $\Omega_p=\Omega_Q$.

Let $\widetilde Q\in (Q/2,Q)$.
From the Poincar\'e inequality, it follows that 
the sequence $\{\ol\psi_{k_j}\}$ 
seen in $H^1_{per}((0,P)\times(0,\widetilde Q))$ 
is bounded too
 and therefore,
up to a subsequence, it converges weakly to some 
$\psi_{\widetilde Q}\in H^1_{per}((0,P)\times(0,\widetilde Q))$. 
Moreover this can be achieved in such a way that there exists 
$\psi\in H^1_{per}(\Omega_Q)$ independent of
$\widetilde Q$ such that $\psi_{\widetilde Q}$
and $\psi$ are equal on
$(0,P)\times(0,\widetilde Q)$. 
Also, up to a subsequence, the sequence $\{\zeta_{k_j}\}$ 
seen in  $L^2((0,P)\times(0,R))$
converges weakly to some $\zeta$ that belongs in fact to $L^2(\Om_{p})
=L^2(\Om_{Q})$, that is, $\zeta$ vanishes almost everywhere outside
$\Om_Q$. Moreover $\zeta\in \overline{\sR}^w$
because
$$\text{dist}_{L^2((0,P)\times (0,\infty))}\left(\zeta_{k_j},\ol{\sR}^w\right)\rightarrow 0.$$
In fact $\zeta$ 
even belongs to the convex set 
$\overline{\sR(\Om_Q)}^w$,
as it can be seen from the characterisation of 
$\overline{\sR(\Om)}^w$ for any open bounded set $\Om$ of measure $m>0$
in terms of decreasing rearrangements on $[0,m]$. See e.g. Lemma 2.2 in \cite{BuMc}.
\footnote{
\label{footnote}
Indeed let $g_1:(0,PQ)\rightarrow \RR$ 
be the right-continuous and
decreasing rearrangement of $\zeta\in L^2(\Omega_Q)$. 
If $\zeta$ is seen in $L^2((0,P)\times(0,R))$ instead, we can also consider
its right-continuous and decreasing rearrangement $g_2 :(0,PR)\rightarrow \RR$.

Note that $g_2$ vanishes on an interval $Z_\zeta$ of length at least $PR-PQ$.
Moreover the graph of $g_1$ is obtained from the one of $g_2$ by deleting from $Z_\zeta$ an 
interval of length $PR-PQ$ and shifting to the left the part of the graph of $g_2$ that is to the right of $Z_\zeta$.

We note by $G_1$ and $G_2$ the rearrangements corresponding to $\zeta_Q$.

With the partial ordering $\prec$ of Burton-McLeod (see their lemma 2.2), we get 
successively
$\zeta\in \overline{\sR}^w$,  $g_2\prec G_2$, $g_1\prec G_1$ and therefore $\zeta\in \overline{\sR(\Omega_Q)}^w$.
}

Let $\xi=\psi|_{(0,P)\times\{Q\}}$. Then,
in a weak sense, $-\Delta\psi=\zeta$ on
$\Omega_Q$, $\psi(\cdot,0)=0$  and $\psi(\cdot,Q)=\xi$.

By choosing 
$\widehat \psi\in H^1_{per}((0,P)\times (0,R))$ such that
$\widehat \psi$ restricted to $\{x_2=0\}$ vanishes and such that
$\widehat\psi=1$ on $(0,P)\times(Q/3,R)$,
we get that
\begin{multline}
\label{eq: computation I}
\mu=\lim_{j\rightarrow \infty }\mu_{k_j}
=\lim_{j\rightarrow \infty }C(\Om_{p_{k_j}},\ol \xi_{k_j} ,\zeta_{k_j})
=\lim_{j\rightarrow \infty }\int_{\Om_{p_{k_j}}}\{\nabla\ol\psi_{k_j}
\cdot \nabla \widehat \psi-\zeta_{k_j}\widehat\psi\} dx
\\=\lim_{j\rightarrow \infty }
\int_{(0,P)\times(0,Q/2)}  \nabla \ol\psi_{k_j}\cdot \nabla \widehat \psi  dx
-\lim_{j\rightarrow \infty }\int_{(0,P)\times(0,R)}\zeta_{k_j}\widehat\psi dx
\\=\int_{\Om_{Q}}\{\nabla\psi\cdot \nabla \widehat \psi-\zeta\widehat\psi\} dx
=C(\Om_{Q},\xi,\zeta)
\end{multline}
and 
\begin{multline}
\label{eq: computation II}
\nu=\lim_{j\rightarrow \infty }\nu_{k_j}
=\lim_{j\rightarrow \infty }\int_{\Om_{p_{k_j}}}\nabla\ol\psi_{k_j}
\cdot \nabla x_2\,  dx
\\
=\lim_{\widetilde Q\rightarrow Q^-}\lim_{j\rightarrow \infty }
\int_{(0,P)\times(0,\widetilde Q)}\nabla\ol\psi_{k_j}
\cdot \nabla x_2\,  dx
=\int_{\Om_{Q}}\nabla\psi\cdot \nabla x_2\,  dx
=I(\Om_{Q},\xi,\zeta).
\end{multline}

Hence $(\Om_Q,\xi,\zeta) \in V$, which contradicts 
(M1).
As a consequence
 $\{\ol\lambda_{1,k_j}\}$ is bounded.
We now apply some of the above arguments again.

From the Poincar\'e inequality, it follows that 
the sequence $\{\ol\psi_{k_j}\}$ seen now 
in $H^1_{per}((0,P)\times(0,R))$ is bounded
 and therefore,
up to a subsequence, it converges weakly to some $\psi\in H^1_{per}((0,P)\times
(0,R))$.
In particular it follows that
$\{\ol\lambda_{2,k_j}\}$ is bounded.
Again, up to a subsequence, the sequence $\{\zeta_{k_j}\}$ 
seen in  $L^2((0,P)\times(0,R))$
converges weakly to some $\zeta$ that belongs 
to $\overline{\sR(\Om_p)}^w$.

By choosing again
 $\widehat \psi\in H^1_{per}((0,P)\times (0,R))$ such that
$\widehat \psi$ restricted to $\{x_2=0\}$ vanishes and such that
$\widehat\psi=1$ on some open set containing $\mathscr S_p$ and
 all $\mathscr S_{p_{k_j}}$, we get that
\begin{multline*}
\mu=\lim_{j\rightarrow \infty }C(\Om_{p_{k_j}},\ol \xi_{k_j} 
,\zeta_{k_j})
=\lim_{j\rightarrow \infty }\int_{\Om_{p_{k_j}}}\{\nabla\ol\psi_{k_j}
\cdot \nabla \widehat \psi-\zeta_{k_j}\widehat\psi\} dx
\\=\int_{\Om_{p}}\{\nabla\psi\cdot \nabla \widehat \psi-\zeta\widehat\psi\} dx
=C(\Om_{p},\psi|_{\mathscr S},\zeta)
\end{multline*}
and 
\begin{multline*}
\nu=
\lim_{j\rightarrow \infty }\int_{\Om_{p_{k_j}}}\nabla\ol\psi_{k_j}
\cdot \nabla x_2\,  dx
=\lim_{j\rightarrow \infty }\int_{\Om_{p}}\nabla\ol\psi_{k_j}
\cdot \nabla x_2\,  dx
\\=\int_{\Om_{p}}\nabla\psi\cdot \nabla x_2\,  dx
=I(\Om_{p},\psi|_{\mathscr S},\zeta).
\end{multline*}
By convexity, for all $\widetilde Q>Q$ and
 $q\in \sP_{\widetilde Q}$ such that $\Om_p\subset\Om_q\subset(0,P)
\times (0,R)$
and $\mathscr S_q\cap  \mathscr S_p=\emptyset$, we have
\begin{multline*}
\int_{\Om_q}|\nabla \psi|^2dx\leq 
\liminf_{j\rightarrow \infty}\int_{\Om_{q}}
|\nabla \ol\psi_{k_j}|^2dx
\\\leq \liminf_{j\rightarrow \infty}\Big(\int_{\Om_{p_{k_j}}}
|\nabla \ol\psi_{k_j}|^2dx
+\text{Const}\,
\text{meas}(\Om_q\backslash\Om_{p_{k_j}})\Big)
\end{multline*}
(because the sequence $\{\ol\lambda_{1,_{k_j}}\}$ is bounded)
and therefore
$$\int_{\Om_p}|\nabla \psi|^2dx\leq \lim_{j\rightarrow \infty}
\int_{\Om_{p_{k_j}}}|\nabla \ol\psi_{k_j}|^2dx.$$
It follows that $\sL(\Om_p,\psi|_{\mathscr S_p},\zeta)\leq\inf_V \sL$ and
$(\Om_p,\psi|_{\mathscr S_p},\zeta)\in V$ so
$\sL(\Om_p,\psi|_{\mathscr S_p},\zeta)=\inf_V \sL$
and $\psi=\ol\psi(p,\zeta,\mu,\nu)$. Hence
\eqref{eq: lim unif 2} holds and 
\begin{equation}\label{eq: equal integrals}
\int_{\Om_p}|\nabla \psi|^2dx= \lim_{j\rightarrow \infty}
\int_{\Om_{p_{k_j}}}|\nabla \ol\psi_{k_j}|^2dx.
\end{equation}
By \eqref{eq: lim unif 2}, for all
$\widetilde Q>Q$ and  $q\in \sP_{\widetilde Q}$ such that
 $\Om_p\subset \Om_{q}\subset (0,P)\times (0,R)$ and
$\mathscr S_p\cap \mathscr S_{q}=\emptyset$,
we have
$$\int_{((0,P)\times(0,R))\backslash\Om_{q}}|\nabla \psi|^2dx=
 \lim_{j\rightarrow \infty}
\int_{((0,P)\times (0,R))\backslash\Om_{q}}|\nabla \ol\psi_{k_j}|^2dx.$$

Hence
$$\int_{(0,P)\times(0,R)}|\nabla \psi|^2dx=
 \lim_{j\rightarrow \infty}
\int_{(0,P)\times (0,R)}|\nabla \ol\psi_{k_j}|^2dx$$
and \eqref{eq: lim unif 3} holds too.

Together with \eqref{eq: equal integrals}, the fact that
$$
\sL(\Om_{p_{k_j}},\ol\xi_{k_j},\zeta_{k_j})\rightarrow 
\sL(\Om_p,\psi|_{\mathscr S_p},\zeta)$$
implies that 
$$\int_0^P|p''_{k_j}|^2dx\rightarrow \int_0^P|p''|^2dx.$$
Hence $p_{k_j}\rightarrow p$ strongly in $H^2_{per}$ .
\end{proof}

\section{On stability}
\label{section: stability}

In this section, we assume that hypotheses
(M1) and (M2) in Theorem
\ref{thm: existence minimizer} hold true.
Moreover the H\"{o}lder exponent $\gamma$ is still equal to $1/4$.

For smooth flows, the evolutionary problem reads as follows (see
e.g. \cite{CoSt}).
Let $\psi(t,\cdot,\cdot)\in C^{\infty}_{per}(\mathit{\Om}(t))$ 
be the stream function at time $t$ on the domain
$\mathit{\Om}(t)\in \kO$, that is, the velocity field is given by
$u=(u_1,u_2)
=(\partial_{x_2}\psi,-\partial_{x_1}\psi)$ on $\mathit{\Om}(t)$.
The Euler equation for an inviscid flow becomes
$$\left\{\begin{array}{l}
\partial_{t}u_1+u_1\, \partial_{x_1}u_1+
u_2\,\partial_{x_2}u_1=-\partial_{x_1}\text{Pr}\\
\\
\partial_{t}u_2+u_1\, \partial_{x_1}u_2
+u_2\, \partial_{x_2}u_2=-\partial_{x_2}\text{Pr}-g
\end{array}\right.~~~\text{ on }\mathit{\Om}(t),
$$
where $\text{Pr}(t,x_1,x_2)$ is the pressure. The kinematic
boundary conditions are
$$\psi(t,x_1,0)=0 $$
on the bottom
and
$$\partial_tp-(\partial_{x_2}\psi,-\partial_{x_1}\psi)\in\text{span} 
\{\partial_s p\}$$
on the upper boundary $\mathscr{S}(t)$ of $\mathit{\Om}(t)$
that we assume of the form 
$$\mathscr{S}(t)=\{p(t,s)\in\RR\times(0,\infty):s\in\RR\}$$
with $p$ smooth such that $p(t,\cdot)\in \sP_Q$ for all $t\in\RR$.
The kinematic boundary condition on the top can also be written
$$\nabla \psi\cdot \partial_s p=\text{det}\, p' $$
where $\nabla$ is the gradient with respect to $(x_1,x_2)$
and $p'$ is the matrix of the first order partial derivatives
with respect to $t$ and $s$.
The dynamic boundary condition on the top reads
(compare with \eqref{bern''})
$$\text{Pr}=
 - T \beta \big(\ell(\sS(t))-P\big)^{\beta -1}\sigma
 + E\Big(2\sigma '' +\sigma^3\Big)+ 
\text{ function of $t$ only }
$$
on $\mathscr{ S}(t)$,
where $'$ denotes differentiation with respect to arc length along the surface
$\mathscr{S}(t)$,
$\sigma(t,x)$ is the curvature of the surface at $x\in \mathscr{S}(t)$ and
 $\ell(\sS(t))$ is the length of $\sS(t)$.

It is a standard result of classical hydrodynamics that the vorticity function
$\zeta=\partial_{x_1}u_2- \partial_{x_2}u_1=-\Delta \psi$
is convected by the flow,
where $\Delta$ is the Laplacian with respect to $(x_1,x_2)$.
Similarly
the circulation along the bottom
is preserved, thanks to the equation
$\partial_{t} u_1+(1/2)\partial_{x_1}(u_1^2)=-\partial_{x_1}
\text{Pr}$ available
at the bottom
(because $u_2=0$ there).  Hence
the circulation $C$ along one period of the free boundary
is preserved too.
These considerations have been the motivation for the variational problems
studied in this paper.

Let us begin our study of stability by defining a distance dist$_0$ 
between $(\Om_1,\xi_1, \zeta_1)$
and  $(\Om_2,\xi_2, \zeta_2)$ in the set
\begin{multline*}
W^*=\{(\Omega,\xi,\zeta): ~
p\in \sP_Q,~
\Om=\Om_p\in \kO\backslash\{\Om_Q\},~
\\
 \xi\in  H^{1/2}_{per}(\mathscr S_p),~ 
 \zeta \in L^2(\Omega)
\subset L^2((0,P)\times (0,\infty))\}
\end{multline*}
(in the definition of $W^*$, $\Omega=\Omega_Q$ is forbidden).
Let $R>0$ be given by \eqref{eq: bound on Omega}.
For $i\in\{1,2\}$, 
we write $\psi_{\Om_i,\xi_i,\zeta_i}$ 
for the solution to \eqref{2.1a}-\eqref{eq: H1per}
corresponding to the domain $\Om_i$, $\xi_i$ 
and the vorticity function $\zeta_i$.
Moreover we  write $\ol\psi(\Om_i,\zeta_i,\mu_i,\nu_i)$ 
for the solution to \eqref{2.1a}-\eqref{special xi}
corresponding to $\Om_{i}$, $\zeta_i$
$\mu_i=C(\Om_i,\xi_i,\zeta_i)$ and 
$\nu_i=I(\Om_i,\xi_i,\zeta_i)$
(see Lemma \ref{lemma: on lambda_i} for the existence of
such a solution), and then extended on 
$\{(0,P)\times (0,\infty)\}\backslash \Om_i$
in a way that is independent of $x_1$ and affine in $x_2$
(see equation \eqref{special xi} on the free boundary, $\xi$
in \eqref{2.1a}-\eqref{special xi} being now not given a priori).

If $\Om_1= \Om_2$ and $\overline\psi(\Om_1,\zeta_1,\mu_1,\nu_1)
=\overline\psi(\Om_2,\zeta_2,\mu_2,\nu_2)$, we set
\begin{multline*}
\text{dist}_0((\Om_1,\xi_1,\zeta_1),(\Om_2,\xi_2,\zeta_2))
=||\zeta_{1}-\zeta_2||_{(H^{1}((0,P)\times \RR))'} 
\\+||\nabla\psi_{\Om_1,\xi_1,\zeta_1}
-\nabla\psi_{\Om_2,\xi_2,\zeta_2}||_{L^2(\Om_1)}~,
\end{multline*}
(when actually $\zeta_1=\zeta_2$) and in all other cases write
\begin{eqnarray*}
&&\text{dist}_0((\Om_1,\xi_1,\zeta_1),(\Om_2,\xi_2,\zeta_2))
=\inf_{s\in[0,P]}||p_1(s+\cdot)-p_2||_{H^2_{per}}
\\&&+||\zeta_{1}-\zeta_2||_{(H^{1}((0,P)\times \RR))'} 
+||\nabla\psi_{\Om_1,\xi_1,\zeta_1}
-\nabla\overline \psi(\Om_1,\zeta_1,\mu_1,\nu_1)||
_{L^2(\Om_1)}
\\&&+||\nabla\psi_{\Om_2,\xi_2,\zeta_2}
-\nabla\overline \psi(\Om_2,\zeta_2,\mu_2,\nu_2)||
_{L^2(\Om_2)}
\\&&+||\nabla\overline \psi(\Om_1,\zeta_1,\mu_1,\nu_1)
-\nabla \overline \psi(\Om_2,\zeta_2,\mu_2,\nu_2)||
_{L^2((0,P)\times(0,R))}~
\end{eqnarray*}
for some parameterisations $p_1$ and $p_2$
of the free boundaries (that is, $p_1,p_2\in \sP_Q$, $\Om_1=\Om_{p_1}$ and
$\Om_2=\Om_{p_2}$). Observe that $p_1,p_2:\RR\rightarrow \RR^2$ 
are uniquely defined only up to translations in $s$.

Theorem \ref{thm: minimization}
implies that the set $D(\mu,\nu,\zeta_Q)$ of minimizers
of $\sL|_V$
endowed with the distance $\text{dist}_0$ is compact
(see \eqref{eq: limit in H1'} 
to \eqref{eq: strong convergence}).

\begin{lemma}
Let
$((\Om_n,\xi_n,\zeta_n):n\in\NN)\subset W$
be such that 
 $$\text{dist}_{L^2((0,P)\times (0,\infty))}
\left(\zeta_n,\ol{\sR}^w\right)\rightarrow 0 ,$$ 
$$C(\Om_n,\xi_n,\zeta_n)\rightarrow \mu,~~
I(\Om_n,\xi_n,\zeta_n)\rightarrow \nu,~~
\limsup_{n\rightarrow \infty}\sL(\Om_n,\xi_n,\zeta_n) \leq\inf_V\sL.$$

Then 
$\Om_n\neq \Om_Q$ for all $n$ sufficiently large, 
the distance  dist$_0$ of $(\Om_n,\xi_n,\zeta_n)$
to the set  $D(\mu,\nu,\zeta_Q)$ 
of minimizers converges to $0$ and
$\lim_{n\rightarrow \infty}\sL(\Om_n,\xi_n,\zeta_n) =\inf_V\sL.$
\end{lemma}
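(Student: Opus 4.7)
I would prove the three conclusions together by the usual ``every subsequence has a sub-subsequence'' device, built upon Theorem \ref{thm: minimization} and Proposition \ref{prop: fixed omega and zeta}. Write $\mu_n:=C(\Om_n,\xi_n,\zeta_n)$ and $\nu_n:=I(\Om_n,\xi_n,\zeta_n)$, so $\mu_n\to\mu$ and $\nu_n\to\nu$. Once $\Om_n\ne\Om_Q$, set $\ol\psi_n:=\ol\psi(\Om_n,\zeta_n,\mu_n,\nu_n)$ and $\ol\xi_n:=\ol\psi_n|_{\mathscr S_{p_n}}$; the crucial input from Proposition \ref{prop: fixed omega and zeta} is the inequality
$$
\sL(\Om_n,\ol\xi_n,\zeta_n)\leq\sL(\Om_n,\xi_n,\zeta_n),
$$
whose two sides differ only through the kinetic term.

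\textbf{Stage 1 (eventually $\Om_n\ne\Om_Q$).} Suppose $\Om_{n_j}=\Om_Q$ along a subsequence. The surface energy on $\Om_Q$ vanishes and the potential energy equals $gPQ^2/2$, so the energy hypothesis bounds $\{\nabla\psi_{\Om_Q,\xi_{n_j},\zeta_{n_j}}\}$ in $L^2(\Om_Q)$ and, via Poincar\'e, the corresponding stream functions in $H^1_{per}(\Om_Q)$. Extracting further, one obtains weak limits $\psi\in H^1_{per}(\Om_Q)$ and $\zeta\in L^2(\Om_Q)$; passes to the limit in $-\Delta\psi_{n_j}=\zeta_{n_j}$ and in the definitions of $C$ and $I$ with a fixed test function exactly as in \eqref{eq: computation I}--\eqref{eq: computation II}; and invokes the decreasing-rearrangement criterion of footnote \ref{footnote} to obtain $\zeta\in\ol{\sR(\Om_Q)}^w$. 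The resulting triple $(\Om_Q,\psi|_{\mathscr S},\zeta)$ then lies in $V$, contradicting hypothesis (M1).

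\textbf{Stage 2 (Theorem \ref{thm: minimization} and orthogonality).} For large $n$, the displayed inequality combined with the limsup hypothesis makes $(p_n,\zeta_n,\mu_n,\nu_n)$ an admissible input for Theorem \ref{thm: minimization}. Extracting the subsequence there produced yields strong $H^2_{per}$-convergence $p_n\to p$, the convergence $\zeta_n\to\zeta$ in $(H^1((0,P)\times(0,\infty)))'$, the stream-function convergences \eqref{eq: lim unif 2}--\eqref{eq: lim unif 3} for $\ol\psi_n$, and $\sL(\Om_{p_n},\ol\xi_n,\zeta_n)\to\inf_V\sL$ with limit triple $(\Om_p,\ol\xi(p,\zeta,\mu,\nu),\zeta)\in D(\mu,\nu,\zeta_Q)$. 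The difference $h_n:=\psi_{\Om_n,\xi_n,\zeta_n}-\ol\psi_n$ is harmonic on $\Om_n$, vanishes on $\{x_2=0\}$, and produces zero $C$- and $I$-contributions, so the Euler--Lagrange identity from the proof of Proposition \ref{prop: fixed omega and zeta} gives $\int_{\Om_n}\nabla\ol\psi_n\cdot\nabla h_n\,dx=0$ and consequently
$$
\tfrac12\int_{\Om_n}|\nabla h_n|^2\,dx=\sL(\Om_n,\xi_n,\zeta_n)-\sL(\Om_n,\ol\xi_n,\zeta_n).
$$
Sandwiching $\inf_V\sL\le\sL(\Om_n,\ol\xi_n,\zeta_n)\to\inf_V\sL$ against the limsup hypothesis forces $\sL(\Om_n,\xi_n,\zeta_n)\to\inf_V\sL$ and $\|\nabla h_n\|_{L^2(\Om_n)}\to0$.

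\textbf{Stage 3 (conclusion).} The five contributions to $\text{dist}_0((\Om_n,\xi_n,\zeta_n),(\Om_p,\ol\xi(p,\zeta,\mu,\nu),\zeta))$ are controlled respectively by \eqref{eq: strong convergence}, \eqref{eq: limit in H1'}, $\|\nabla h_n\|_{L^2(\Om_n)}\to0$, the tautology that the fourth term vanishes because $\ol\xi(p,\zeta,\mu,\nu)$ is by construction the trace of $\ol\psi(p,\zeta,\mu,\nu)$, and \eqref{eq: lim unif 3}. Hence $\text{dist}_0\to0$ along the extracted sub-subsequence, and the subsequence-of-subsequence principle promotes both $\text{dist}_0(\cdot,D(\mu,\nu,\zeta_Q))\to0$ and $\sL(\Om_n,\xi_n,\zeta_n)\to\inf_V\sL$ to the full original sequence. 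The hardest point is Stage 1: the overlined machinery fails precisely at $\Om_Q$, so one must extract the contradiction with (M1) from raw weak limits together with the rearrangement characterisation of footnote \ref{footnote}; everything else reduces to bookkeeping built on Theorem \ref{thm: minimization} and Proposition \ref{prop: fixed omega and zeta}.
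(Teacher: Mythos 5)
Your proposal is correct and follows essentially the same route as the paper: reduce to the case $\Om_n\neq\Om_Q$ by extracting a weak limit and contradicting (M1), apply Theorem \ref{thm: minimization} to the projected sequence $(\Om_n,\ol\xi_n,\zeta_n)$, and use the orthogonality $\int_{\Om_n}\nabla\ol\psi_n\cdot\nabla h_n\,dx=0$ (which the paper derives via the explicit decomposition $\ol\psi_n-\ol\lambda_{1,n}x_2-\ol\lambda_{2,n}\widetilde\psi_n$) to convert the energy gap into $\|\nabla h_n\|_{L^2(\Om_n)}^2\to0$. The only differences are organisational — you treat the $\Om_Q$ case first and verify the five terms of $\mathrm{dist}_0$ directly rather than by contradiction — and these do not change the substance.
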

\begin{proof}
Let us first suppose that
$\Om_n\neq \Om_Q$ for all $n\in\NN$.
For each $n$, let $\mu_n=C(\Om_n,\xi_n,\zeta_n)$,
$\nu_n=I(\Om_n,\xi_n,\zeta_n)$ and
$p_n\in \sP_Q$ be such that $\Om_n=\Om_{p_n}$.

We write $\ol\psi_n$ 
for the solution to \eqref{2.1a}-\eqref{special xi}
corresponding to
the domain $\Om_{ n}\neq \Om_Q$,
the vorticity function $\zeta_n$, circulation $\mu_n$
and horizontal impulse $\nu_n$,
 and write
$\ol\xi_n$ for the trace of $\ol\psi_n$ to the upper boundary
of $\Omega_n$.
In particular
\begin{equation}\label{eq: clearly}
C(\Om_n,\ol\xi_n,\zeta_n)=\mu_n
~\text{ and } ~I(\Om_n,\ol\xi_n,\zeta_n)=\nu_n~.
\end{equation}
Moreover we write $\ol\lambda_{1n}$
and $\ol\lambda_{2n}$
for the corresponding $\lambda_1$ and $\lambda_2$ given by Lemma
\ref{lemma: on lambda_i} applied to
$\Om_{ n}$,
$\zeta_n$, $\mu_n$ and
$\nu_n$.

As
$$\sL(\Om_n,\ol\xi_n
,\zeta_n)\leq \sL(\Om_n,\xi_n,\zeta_n),$$
(see Proposition \ref{prop: fixed omega and zeta}),
we can apply Theorem \ref{thm: minimization} to the sequence
$\{(\Om_n,\ol\xi_n,\zeta_n)\}_{n\geq 1}$:
the distance  dist$_0$
of $(\Om_n,\ol\xi_n,\zeta_n)$
to the set  $D(\mu,\nu,\zeta_Q)$ 
of minimizers converges to $0$ (see 
\eqref{eq: limit in H1'} to 
\eqref{eq: strong convergence}).
We also have proved that there
is at least one minimizer.

This implies that the  distance dist$_0$ 
of $(\Om_n,\xi_{n},\zeta_n)$
to the set  $D(\mu,\nu,\zeta_Q)$ 
of minimizers converges to $0$. To see it,
we write $\psi_{\Om,\xi,\zeta}$ 
for the solution to \eqref{2.1a}-\eqref{eq: H1per}
corresponding to
the domain $\Om\neq \Om_Q$, $\xi$ 
and the vorticity function $\zeta$ (however $\xi$ is not assumed to
satisfy \eqref{special xi}).
We let $\widetilde \psi_n$ be, as in \eqref{eq: widetilde psi},
the harmonic function on $\mathit \Om_n$ that
vanishes on $\{x_2=0\}$, is $1$ on $\mathscr S_n$ and is $P$-periodic
in $x_1$.

Looking for a contradiction,
 assume that some subsequence, still denoted by
$\{(\Om_n,\xi_{n},\zeta_n)\}$, is such that
its  distance dist$_0$ 
to  $D(\mu,\nu,\zeta_Q)$  remains away from $0$.
Taking a further subsequence if needed, we may also assume that
$(\Om_n,\ol \xi_n,\zeta_n)$ tends to some $(\Om,\xi,\zeta)\in 
D(\mu,\nu,\zeta)$. 
We get
\begin{eqnarray*}
&&\int_{\Omega_{n}}|\nabla(\psi_{\Om_n,\xi_n,\zeta_n}
-\ol\psi_n)|^2dx
=\int_{\Omega_{n}}|\nabla \psi_{\Om_n,\xi_n,\zeta_n}|^2dx
-\int_{\Omega_{n}}|\nabla \ol \psi_n |^2dx
\\&&-2 \int_{\Omega_{n}}\nabla 
(\ol\psi_n-
\ol\lambda_{1,n}x_2- \ol\lambda_{2,n}\widetilde\psi_n
)\cdot\nabla(
\psi_{\Om_n,\xi_n,\zeta_n}-\ol\psi_{n})
\, dx
\\&&-2 \int_{\Omega_{n}}\nabla (
\ol\lambda_{1,n}x_2+ \ol\lambda_{2,n}\widetilde\psi_n
)\cdot\nabla(
\psi_{\Om_n,\xi_n,\zeta_n}-\ol\psi_{n})
\, dx
\\&\stackrel{\eqref{eq: clearly}}
=&\int_{\Omega_{n}}|\nabla \psi_{\Om_n,\xi_n,\zeta_n}|^2dx-
\int_{\Omega_{n}}|\nabla \ol\psi_{n} |^2dx
-2\cdot 0
\\&&-2\ol\lambda_{1,n}\{I(\Om_n,\xi_n,\zeta_n)-\mu_n\}
-2\ol\lambda_{2, n}\{C(\Om_n,\xi_n,\zeta_n)-\nu_n\}
\\&=&\int_{\Omega_{n}}|\nabla \psi_{\Om_n,\xi_n,\zeta_n}|^2dx-
\int_{\Omega_{n}}|\nabla \ol\psi_{n} |^2dx
\\&=&2\{\sL(\Om_n,\xi_n,\zeta_n)-\sL(\Om_n,\ol\xi_{n},\zeta_n)\}
\rightarrow 0
\end{eqnarray*}
because $\limsup_{n\rightarrow \infty}\sL(\Om_n,\xi_n,\zeta_n)\leq \inf_V\sL$,
$\lim_{n\rightarrow \infty}\sL(\Om_n,\ol\xi_n,\zeta_n)= \inf_V\sL$
by \eqref{eq: limsup is limit},
and $\ol\psi_n- \ol\lambda_{1,n}x_2- \ol\lambda_{2,n}\widetilde\psi_n$
has zero boundary data so it may be treated as a test function.
As a further consequence,
$\lim_{n\rightarrow \infty}\sL(\Om_n,\xi_n,\zeta_n)= \inf_V \sL$.
Hence
the distance dist$_0$ of
$\{(\Om_n,\xi_n,\zeta_n)\}$ to $D(\mu,\nu,\zeta_Q)$ tends to $0$, 
which is a contradiction.

We have assumed $\Om_n\neq \Om_Q$ for all $n\in\NN$. If
$\Om_n\neq \Om_Q$ for all $n\in\NN$ sufficiently large, 
the argument is the same. On the other hand if
$\Om_n= \Om_Q$ for infinitely many $n$, we can assume by extracting 
a subsequence that $\Om_n =\Om_Q$ for all $n\in\NN$. This case leads to a 
contradiction as follows, and therefore cannot occur.
Taking a further subsequence if needed, we can assume that
$\zeta_n\rightharpoonup \zeta$ weakly in $L^2(\Om_Q)$ and 
$\psi_{\Om_Q,\xi_n,\zeta_n}\rightarrow \psi$ in $H^1(\Om_Q)$ for some
$\zeta$ and $\psi$. We get $\zeta \in 
\overline{\sR(\Om_Q)}^w$ (see the footnote \ref{footnote}),  
$C(\Om_Q,\xi,\zeta)=\mu$
and $I(\Om_Q,\xi,\zeta)=\nu$, where $\xi$ is the trace of $\psi$.
Hence $(\Om_Q,\xi,\zeta)\in V$, which is in contradiction with (M1).
\end{proof}

We now let $t$ denote time and prove the following stability result,
after first giving a definition.

\noindent
{\bf Definition: regular flow.}
Given $\tm\in(0,\infty]$,
we call $\{\Om(t),\xi(t),\zeta(t)\}_{t \in [0,\tm)}$ a \emph{regular flow}
if, for all $t$, $\Om(t) \in \kO$,
$\xi(t) \in H^{1/2}_{per}(\sS(t))$ with
$\sS(t)=\partial \Om(t) \setminus ((0,P)\times\{0\})$,
$\zeta(t) \in L^2(\Om(t))\subset L^2((0,P)\times (0,\infty))$
  and there exists a  stream function 
$\psi \in L^\infty((0,\tm),H^2_{per}((0,P)\times 
(0,\infty)))$  
\footnote{
By definition of
this space, $\psi\in L^1_{loc}((0,\tm)\times (0,P)\times 
(0,\infty))$ and,
for almost all $t$, $\psi(t,\cdot)\in
H^2_{per}((0,P)\times (0,\infty))$. Moreover
all the derivatives up to order $2$
with respect to $x_1$ and $x_2$ are in
$L^1_{loc}((0,\tm)\times (0,P)\times 
(0,\infty))$ and the function
$t\rightarrow ||\psi(t,\cdot)||_{H^2((0,P)
\times (0,\infty))}$ is in $L^\infty$.
 }
such that
$\psi(t)=\psi(t,\cdot)|_{\Om(t)}$ is a solution to 
\eqref{fbvp}(a--d) 
for almost all $t\in[0,\tm)$. 
Let
$\psi$ give rise to the velocity field $u
=(\partial_{x_2}\psi,-\partial_{x_1}\psi)$ 
on $(0,\tm)\times (0,P)\times (0,\infty)$. 
Concerning the dependence of the
domain $\Omega(t)$ on $t$, we suppose
that $\bigcup_{t\in[0,\tm)}\Om(t)$ is bounded,
we let  $\widetilde \chi(t)$ be the characteristic function of 
$\Omega(t)$, and we assume that
the mapping $t\rightarrow \widetilde \chi 
(t) \in L^2((0,P)\times (0,\infty))$ is
continuous on $[0,\tm)$
and that  $\widetilde\chi\in L^\infty((0,\tm)\times(0,P)
\times(0,\infty))$ satisfies
the linear transport equation
$$\partial_t \widetilde\chi + \div(\widetilde\chi u) = 0
~\text{ on }~(0,\tm)\times \RR\times(0,\infty)$$
(in the sense of distributions, where $\widetilde \chi$ and $u$
are extended periodically in $x_1$).
In addition 
the mapping $t\rightarrow \zeta(t) \in L^2((0,P)\times(0,\infty))$
is supposed continuous on $[0,\tm)$
and $u$
satisfies
the time-dependent hydrodynamic problem
(Euler equation or vorticity equation), which takes the form of
convection of 
$\zeta=-\widetilde \chi \Delta \psi$ by $u$ according to
$$
\partial_t \zeta + \div(\zeta u) = 0 
$$
(in the same sense as above). 
Finally
$\mathcal{L}, I$ and $C$ are all assumed
to be conserved, that is, at all
$t\in(0,\tm)$ they have the same values as at $t=0$.

For smooth functions
these conditions are weaker 
than those of
the full evolutionary problem,
for we do not need
to be more precise in the statement of the following theorem.

Our main stability result now follows.
Whilst this is formulated in terms of $\text{dist}_0$, the subsequent
Remarks will discuss alternatives to $\text{dist}_0$ which some readers
may consider to be more natural.

\begin{theorem}
For all $\epsilon>0$, there exists  $\delta>0$ such that if
$$(\Om_0,\xi_0,\zeta_0)\in W,~~\sL(
\Om_0,\xi_0,\zeta_0)<\delta+\min_V \sL,$$
 $$\text{dist}_{L^2((0,P)\times (0,\infty))}
 \left(\zeta_0,\ol{\sR(\Omega_0)}^w
\right)<\delta ,~~
|C(\Om_0,\xi_0,\zeta_0)-\mu|<\delta,~~
|I(\Om_0,\xi_0,\zeta_0)-\nu|<\delta,$$
and if 
$$t\rightarrow (\Om(t),\xi(t), \zeta(t))\in  W $$
is a regular flow on the time interval $[0,\tm)$ such that
$(\Om(0),\xi(0), \zeta(0))=(\Om_0,\xi_0,\zeta_0)$ 
(for some $\tm\in(0,\infty]$ that is not prescribed), then
$$
\Om(t)\neq \Om_Q \hbox{ and }
\hbox{dist}_0\Big(\big(\Om(t),\xi(t),\zeta(t)\big),D(\mu,\nu,\zeta_Q)
\Big)<\epsilon$$
for all $ t\in[0,\tm)$,
\end{theorem}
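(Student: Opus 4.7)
The approach is by contradiction, reducing to the lemma that immediately precedes the theorem. Suppose the conclusion fails for some $\epsilon>0$; then one obtains sequences $\delta_n\to 0$, initial data $(\Omega_0^n,\xi_0^n,\zeta_0^n)\in W$ satisfying the four $\delta_n$-smallness hypotheses, corresponding regular flows on time intervals $[0,\overline{t}_n)$, and times $t_n\in [0,\overline{t}_n)$ at which either $\Omega^n(t_n)=\Omega_Q$ or
$\text{dist}_0\bigl((\Omega^n(t_n),\xi^n(t_n),\zeta^n(t_n)),D(\mu,\nu,\zeta_Q)\bigr)\geq \epsilon$.
Set $(\Omega_n,\xi_n,\zeta_n):=(\Omega^n(t_n),\xi^n(t_n),\zeta^n(t_n))$. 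The plan is to check the four hypotheses of the preceding lemma for this sequence; its conclusion that $\Omega_n\neq \Omega_Q$ eventually and $\text{dist}_0((\Omega_n,\xi_n,\zeta_n),D(\mu,\nu,\zeta_Q))\to 0$ then contradicts both possibilities at $t_n$.

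Conservation of $\sL$, $C$ and $I$ along any regular flow is built into the definition, so the $\delta_n$-smallness at time $0$ immediately yields $C(\Omega_n,\xi_n,\zeta_n)\to \mu$, $I(\Omega_n,\xi_n,\zeta_n)\to \nu$, and $\limsup_{n\to\infty} \sL(\Omega_n,\xi_n,\zeta_n)\leq \min_V \sL$. The only non-trivial task is to show
$\text{dist}_{L^2((0,P)\times (0,\infty))}(\zeta_n,\ol{\sR}^w)\to 0$.

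For this I would invoke the transport theory referenced in the Appendix. Given $n$, pick $\tilde\zeta_0^n\in \ol{\sR(\Omega_0^n)}^w$ with $\|\zeta_0^n-\tilde\zeta_0^n\|_{L^2}<2\delta_n$. Since the velocity field $u^n=(\partial_{x_2}\psi^n,-\partial_{x_1}\psi^n)$ is divergence-free with $\psi^n\in L^\infty((0,\overline{t}_n),H^2_{per})$, the linear transport equation with datum $\tilde\zeta_0^n$ is well posed and its solution $\tilde\zeta^n(t)$ is advected along the same (essentially measure-preserving) flow as $\zeta^n$ and $\widetilde\chi^n$. Two consequences follow: the distribution function of $\tilde\zeta^n(t)$ on $\Omega^n(t)$ coincides with that of $\tilde\zeta_0^n$ on $\Omega_0^n$, so by the Burton--McLeod characterisation recalled in the footnote of Theorem \ref{thm: minimization} we have $\tilde\zeta^n(t)\in \ol{\sR(\Omega^n(t))}^w$ for every $t\in[0,\overline{t}_n)$; and volume-preservation yields $\|\zeta^n(t)-\tilde\zeta^n(t)\|_{L^2}=\|\zeta_0^n-\tilde\zeta_0^n\|_{L^2}<2\delta_n$. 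Evaluating at $t_n$ gives the desired $\text{dist}_{L^2}(\zeta_n,\ol{\sR}^w)<2\delta_n\to 0$.

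The preceding lemma then applies and produces the contradiction. The principal obstacle is the transport step: the moving domain $\Omega^n(t)$, the velocity $u^n$ and the vorticity all have limited regularity, so one must rely on a suitable transport theory for divergence-free velocities arising from $H^2_{per}$ stream functions (precisely the role of the Appendix), and carefully justify that both membership in $\ol{\sR(\Omega)}^w$ and the $L^2$-distance to $\ol{\sR}^w$ are preserved in time despite the simultaneous deformation of $\Omega^n(t)$. Everything else is conservation-law bookkeeping plus an application of the compactness and $\sL$-continuity already established in Theorem \ref{thm: minimization}.
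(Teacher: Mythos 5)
Your proposal is correct and follows essentially the same route as the paper: a contradiction argument that feeds the sequence $(\Om_n(t_n),\xi_n(t_n),\zeta_n(t_n))$ into the preceding lemma, using conservation of $\sL$, $C$, $I$ for three of the hypotheses and a transported ``follower'' in $\ol{\sR}^w$ (justified by the DiPerna--Lions/Bouchut theory of the Appendix, with $L^2$-norm preservation of the difference) for the fourth. The paper's proof is exactly this, including the final case split on whether $\Om_n(t_n)=\Om_Q$ infinitely often.
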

\begin{proof}
If not, there exist $\epsilon>0$ and, for each $n$, a regular flow
$\{\Om_{n}(t),\xi_{n}(t),\zeta_{n}(t)\}_{t \in [0,\tm_n)}$ such that
$$\sL(\Om_{n}(0),\xi_{n}(0),\zeta_{n}(0))<\frac 1 n +\min_V\sL,
~~ \text{dist}_{L^2((0,P)\times (0,\infty))}
\left(\zeta_{n}(0),\ol{\sR(\Omega_0)}^w
\right)<\frac 1 n ,$$
$$|C(\Om_{n}(0),\xi_{n}(0),\zeta_{n}(0))-\mu|<\frac 1 n,~~
|I(\Om_{n}(0),\xi_{n}(0),\zeta_{n}(0))-\nu|<\frac 1 n$$
and $t_n \in [0,\tm_n)$ such that
either $\Om_n(t_n)=\Om_Q$ or
$$\text{dist}_0((\Om_{n}(t_n),\xi_{n}(t_n),\zeta_{n}(t_n)),D(\mu,\nu,\zeta_Q))\geq \epsilon.$$
Therefore
\begin{eqnarray*}
\sL(\Om_{n}(t_n),\xi_{n}(t_n),\zeta_{n}(t_n))
&=&\sL(\Om_{n}(0),\xi_{n}(0),\zeta_{n}(0)), \\
C(\Om_{n}(t_n),\xi_{n}(t_n),\zeta_{n}(t_n))
&=&C(\Om_{n}(0),\xi_{n}(0),\zeta_{n}(0)), \\
I(\Om_{n}(t_n),\xi_{n}(t_n),\zeta_{n}(t_n))
&=&I(\Om_{n}(0),\xi_{n}(0),\zeta_{n}(0)).
\end{eqnarray*}

We get
$$
\text{dist}_{L^2((0,P)\times (0,\infty))}
\left(\zeta_{n}(t_n),\ol{\sR}^w
\right)<\frac 1 n ;$$
to see this, we introduce as in \cite{grbarma} a ``follower''
$\chi_n(t) \in \ol{\sR}^w$ for $\zeta_n(t)$ as follows.
For each $n\in\NN$ choose 
$\chi_n(0) \in \ol{\sR(\Omega_n(0))}^w\subset\ol{\sR}^w$ with
$|| \chi_n(0)-\zeta_n(0) ||_{L^2(\Om_n(0))} < 1/n$
and let $t\rightarrow \chi_n(t)\in L^2((0,P)\times(0,\infty))$
 be the unique solution 
of the linear transport equation 
$\partial_t\chi_n+\text{div}_x(\chi_n u_n)=0$
that is continuous in $t\in[0,\tm_n)$ (with periodicity condition in $x_1$),
where
the velocity $u_n(t)$, 
as envisaged in the definition of regular flow, is assumed to lie in
$L^\infty((0,\tm_n),H^1_{per}((0,P)\times(0,\infty)))$.

The results of DiPerna and Lions \cite{DiPL} and of
Bouchut \cite{Bo} guarantee  that, for all $t\in(0,\tm_n)$,
$\chi_n(t)$ and $\zeta_n(t)$ are convected by the incompressible
flow and thus are rearrangements of 
$\chi_n(0)$ and $\zeta_n(0)$ respectively
vanishing 
outside $\Omega_n(t)$. 
See the Appendix
for a brief account of the theory in \cite{DiPL,Bo} 
that is needed on transport
equations, and in particular for 
the existence and uniqueness of $\chi_n$.

As in \cite{grbarma} we have $\chi_n(t) \in \ol{\sR}^w$ and
$\chi_n - \zeta_n$ is a solution of the transport equation, so
$$||\chi_n(t_n)-\zeta_n(t_n)||_{L^2(\Omega_n(t))}
=||\chi_n(0)-\zeta_{n}(0)||_{L^2(\Omega_n(0))}<1/n.$$
If $\Om_n(t_n)=\Om_Q$ for infinitely many $n$, we would get
a contradiction with the previous lemma.
If $\Om_n(t_n)=\Om_Q$ for finitely many $n$, 
the fact that, for large $n$, 
$(\Om_{n}(t_n),\xi_{n}(t_n),\zeta_{n}(t_n))$ 
stays away from $D(\mu,\nu,\zeta_Q)$ (with respect to dist$_0$)
would again lead to a contradiction with the previous lemma.
\end{proof}

{\bf Remarks.} 
{\bf 1.}
In the statement, the hypotheses
$$\sL(\Omega_0,\xi_0,\zeta_0) <\delta+\min_V \sL,~
|C(\Om_0,\xi_0,\zeta_0) -\mu|<\delta,~
|I(\Om_0,\xi_0,\zeta_0)-\nu|<\delta$$
can be replaced by
$$
\Om_0\neq \Om_Q~\hbox{ and }~
\text{dist}_0\Big((\Om_0,\xi_0,\zeta_0),D(\mu,\nu,\zeta_Q)\Big)
<\delta$$ 
because
$$\sL(\Omega_0,\xi_0,\zeta_0)\rightarrow \min_V \sL,~
C(\Om_0,\xi_0,\zeta_0) \rightarrow \mu,~
I(\Om_0,\xi_0,\zeta_0) \rightarrow \nu$$
as
$\text{dist}_0((\Om_0,\xi_0,\zeta_0),D(\mu,\nu,\zeta_Q))\rightarrow 0$.

{\bf 2.} Solutions to
the evolutionary problem that are considered are supposed
regular enough, but nothing is claimed about their existence.
This is why the stability result is said to be ``conditional''.
The choice of the distance in the statement is crucial for
its meaning. Conditional stability is here with respect to the
distance dist$_0$, that is, the distance dist$_0$ to the set
of minimizers is controlled for subsequent times if it is
well enough controlled initially. However nothing is said
about other distances and it could be that some other significant
distance blows up whereas dist$_0$ remains under control; as a consequence
the solution would nevertheless
cease to exist in the considered functional space. 
On the other hand, a control on
dist$_0$ could be the starting point of a  well-posedness analysis
(well-posedness of the Cauchy problem for related settings is discussed
in many papers, see e.g. \cite{CoSc}).

{\bf 3.} In the statement of the theorem, 
dist$_0$ can be replaced by the simpler distance 
\begin{eqnarray*}
&&\text{dist}_1((\Om_1,\xi_1,\zeta_1),(\Om_2,\xi_2,\zeta_2))
=\inf_{s\in[0,P]}||p_1(s+\cdot)-p_2||_{H^2_{per}}
\\&&+||\zeta_{1}-\zeta_2||_{(H^{1}((0,P)\times \RR))'} 
+||\nabla\psi_{\Om_1,\xi_1,\zeta_1}
-\nabla\psi_{\Om_2,\xi_2,\zeta_2}||_{L^2((0,P)\times (0,R))}~,
\end{eqnarray*}
where  
$\nabla\psi_{\Om_i,\xi_i,\zeta_i}$ has been trivially extended
on $((0,P)\times(0,R))\backslash \Om_i$
(thus $\mbox{dist}_1$ is defined in terms of vorticity and velocity).
Indeed, for all $\epsilon_1>0$, there exists $\epsilon_0>0$ such that,
for all $(\Om,\xi,\zeta)\in W^*$,
$$
\hbox{dist}_0\Big((\Om,\xi,\zeta)\,,\,D(\mu,\nu,\zeta_Q)\Big)<\epsilon_0
\Rightarrow
\hbox{dist}_1\Big((\Om,\xi,\zeta)\,,\,D(\mu,\nu,\zeta_Q)\Big)<\epsilon_1~.$$
Otherwise there would exist $\epsilon_1>0$ and two sequences
$\{(\Om_{1,n},\xi_{1,n},\zeta_{1,n})\}\subset W^*$ and
$\{(\Om_{2,n},\xi_{2,n},\zeta_{2,n})\}\subset D(\mu,\nu,\zeta_Q)$ 
such that
$$\lim_{n\rightarrow \infty}\hbox{dist}_0
\Big((\Om_{1,n},\xi_{1,n},\zeta_{1,n})\,,\,
(\Om_{2,n},\xi_{2,n},\zeta_{2,n})\Big)=0$$
and
\begin{multline*}
\inf_{n\in\NN}\hbox{dist}_1\Big((\Om_{1,n},\xi_{1,n},\zeta_{1,n})
\,,\,(\Om_{2,n},\xi_{2,n},\zeta_{2,n})\Big)
\\\geq \inf_{n\in\NN}\hbox{dist}_1\Big((\Om_{1,n},\xi_{1,n},\zeta_{1,n})
\,,\,D(\mu,\nu,\zeta_Q)\Big)
\geq \epsilon_1~.
\end{multline*}
Taking subsequences if necessary, we can assume that
$$\lim_{n\rightarrow \infty}\hbox{dist}_0\Big((\Om_{2,n},\xi_{2,n},\zeta_{2,n})
\,,\,(\Om,\xi,\zeta)
\Big)=0$$
for some 
$(\Om,\xi,\zeta) \in D(\mu,\nu,\zeta_Q)$  and thus
$$\lim_{n\rightarrow \infty}\hbox{dist}_0\Big((\Om_{1,n},\xi_{1,n},\zeta_{1,n})
\,,\,(\Om,\xi,\zeta)
\Big)=0.$$
If $\Omega_{1,n}=\Omega$ and 
\begin{multline*}
\nabla\overline{\psi}(\Omega_{1,n},\zeta_{1,n},C(\Omega_{1,n},\xi_{1,n},\zeta_{1,n}),I(\Omega_{1,n},\xi_{1,n},\zeta_{1,n}))
\\ = \nabla\overline{\psi}(\Omega,\zeta,C(\Omega,\xi,\zeta),I(\Omega,\xi,\zeta))
\end{multline*}
then 
\[
\hbox{dist}_1\Big((\Om_{1,n},\xi_{1,n},\zeta_{1,n})
\,,\,(\Om,\xi,\zeta)
\Big)
=
\hbox{dist}_0\Big((\Om_{1,n},\xi_{1,n},\zeta_{1,n})
\,,\,(\Om,\xi,\zeta)
\Big),
\]
whereas otherwise,
\begin{multline*}
\| \nabla\psi_{\Omega_{1,n},\xi_{1,n},\zeta_{1,n}}
- \nabla\psi_{\Omega,\xi,\zeta}\|_{L^2((0,P)\times(0,R))}
\\ \leq \| \nabla\psi_{\Omega_{1,n},\xi_{1,n},\zeta_{1,n}}
- \nabla\psi_{\Omega,\xi,\zeta}\|_{L^2(\Omega_{1,n})}
+ \|\nabla\psi_{\Omega,\xi,\zeta}\|_{L^2(\Omega\setminus\Omega_{1,n})}
\\ \leq \| \nabla\psi_{\Omega_{1,n},\xi_{1,n},\zeta_{1,n}} 
 - \nabla\overline{\psi}(\Omega_{1,n},\zeta_{1,n},C(\Omega_{1,n},\xi_{1,n},\zeta_{1,n}),I(\Omega_{1,n},\xi_{1,n},\zeta_{1,n}))\|_{L^2(\Omega_{1,n})}
\\ + \| \nabla\overline{\psi}(\Omega_{1,n},\zeta_{1,n},C(\Omega_{1,n},\xi_{1,n},\zeta_{1,n}),I(\Omega_{1,n},\xi_{1,n},\zeta_{1,n}))
\\ - \nabla\overline{\psi}(\Omega,\zeta,C(\Omega,\xi,\zeta),I(\Omega,\xi,\zeta))\|_{L^2(\Omega_{1,n})}
\\ + \| \nabla\overline{\psi}(\Omega,\zeta,C(\Omega,\xi,\zeta),I(\Omega,\xi,\zeta)) - \nabla\psi_{\Omega,\xi,\zeta}\|_{L^2(\Omega_{1,n})}
+ \| \nabla\psi_{\Omega,\xi,\zeta}\|_{L^2(\Omega\setminus\Omega_{1,n})}
\\ \leq \| \nabla\psi_{\Omega_{1,n},\xi_{1,n},\zeta_{1,n}} 
- \nabla\overline{\psi}(\Omega_{1,n},\zeta_{1,n},C(\Omega_{1,n},\xi_{1,n},\zeta_{1,n}),I(\Omega_{1,n},\xi_{1,n},\zeta_{1,n}))\|_{L^2(\Omega_{1,n})}
\\ + \| \nabla\overline{\psi}(\Omega_{1,n},\zeta_{1,n},C(\Omega_{1,n},\xi_{1,n},\zeta_{1,n}),I(\Omega_{1,n},\xi_{1,n},\zeta_{1,n}))
\\ - \nabla\overline{\psi}(\Omega,\zeta,C(\Omega,\xi,\zeta),I(\Omega,\xi,\zeta))\|_{L^2((0,P)\times(0,R))}
\\ + \| \nabla\overline{\psi}(\Omega,\zeta,C(\Omega,\xi,\zeta),I(\Omega,\xi,\zeta)) - \nabla\psi_{\Omega,\xi,\zeta}\|_{L^2(\Omega)}
\\ +   \| \nabla\overline{\psi}(\Omega,\zeta,C(\Omega,\xi,\zeta),I(\Omega,\xi,\zeta)) \|_{L^2(\Omega_{1,n}\setminus\Omega)}
+ \| \nabla\psi_{\Omega,\xi,\zeta}\|_{L^2(\Omega\setminus\Omega_{1,n})} ,
\end{multline*}
and hence
\begin{multline*}
\hbox{dist}_1\Big((\Om_{1,n},\xi_{1,n},\zeta_{1,n})
\,,\,(\Om,\xi,\zeta)
\Big)
\leq
\hbox{dist}_0\Big((\Om_{1,n},\xi_{1,n},\zeta_{1,n})
\,,\,(\Om,\xi,\zeta)
\Big)
\\ + \| \nabla\overline{\psi}(\Omega,\zeta,C(\Omega,\xi,\zeta),I(\Omega,\xi,\zeta)) \|_{L^2(\Omega_{1,n}\setminus\Omega)}
+ \| \nabla\psi_{\Omega,\xi,\zeta}\|_{L^2(\Omega\setminus\Omega_{1,n})} .
\end{multline*}
Thus in either case we get the contradiction
$$0=
\lim_{n\rightarrow \infty}\hbox{dist}_1\Big((\Om_{1,n},\xi_{1,n},\zeta_{1,n})
\,,\,(\Om,\xi,\zeta)
\Big)\geq \epsilon_1.$$

{\bf 4.} In the statement of the theorem, 
the hypotheses
$$\sL(\Omega_0,\xi_0,\zeta_0) <\delta+\min_V \sL,~
|C(\Om_0,\xi_0,\zeta_0) -\mu|<\delta,~
|I(\Om_0,\xi_0,\zeta_0)-\nu|<\delta$$
can also be replaced by
$$
\text{dist}_1\Big((\Om_0,\xi_0,\zeta_0),D(\mu,\nu,\zeta_Q)\Big)
<\delta$$ 
(compare with the first remark).
Indeed, for all $\delta>0$, there exists $\delta_1>0$ such that
all $(\Om,\xi,\zeta)\in W$ satisfying
$
\hbox{dist}_1\Big((\Om,\xi,\zeta)\,,\,D(\mu,\nu,\zeta_Q)\Big)<\delta_1
$
also satisfy
$$\sL(\Omega,\xi,\zeta)<\delta+ \min_V \sL,~
|C(\Om,\xi,\zeta) -\mu|<\delta,~
|I(\Om,\xi,\zeta)- \nu|<\delta.$$
Otherwise there would exist $\delta>0$ and two sequences
$\{(\Om_{1,n},\xi_{1,n},\zeta_{1,n})\}\subset W$ and
$\{(\Om_{2,n},\xi_{2,n},\zeta_{2,n})\}\subset D(\mu,\nu,\zeta_Q)$ 
such that
$$\lim_{n\rightarrow \infty}\hbox{dist}_1
\Big((\Om_{1,n},\xi_{1,n},\zeta_{1,n})\,,\,
(\Om_{2,n},\xi_{2,n},\zeta_{2,n})\Big)=0$$
and such that one of the following inequalities holds:
\begin{multline*}
\inf_{n\in\NN}
\sL(\Omega_{1,n},\xi_{1,n},\zeta_{1,n})\geq \delta+ \min_V \sL,~
\\ \inf_{n}
|C(\Om_{1,n},\xi_{1,n},\zeta_{1,n}) -\mu|\geq \delta,~
\inf_{n}
|I(\Om_{1,n},\xi_{1,n},\zeta_{1,n})- \nu|\geq \delta.
\end{multline*}
Taking subsequences if necessary, we can assume that
$$\lim_{n\rightarrow \infty}\hbox{dist}_0\Big((\Om_{2,n},\xi_{2,n},\zeta_{2,n})
\,,\,(\Om,\xi,\zeta)
\Big)=0$$
for some 
$(\Om,\xi,\zeta) \in D(\mu,\nu,\zeta_Q)$. Arguing as above, we get
$$\lim_{n\rightarrow \infty}\hbox{dist}_1\Big((\Om_{2,n},\xi_{2,n},\zeta_{2,n})
\,,\,(\Om,\xi,\zeta)
\Big)=0$$
  and thus
$$\lim_{n\rightarrow \infty}\hbox{dist}_1\Big((\Om_{1,n},\xi_{1,n},\zeta_{1,n})
\,,\,(\Om,\xi,\zeta)
\Big)=0.$$
We then get the contradiction
$$
\sL(\Omega_{1,n},\xi_{1,n},\zeta_{1,n})\rightarrow  \min_V \sL,~
C(\Om_{1,n},\xi_{1,n},\zeta_{1,n}) \rightarrow \mu,~
I(\Om_{1,n},\xi_{1,n},\zeta_{1,n})\rightarrow  \nu
$$
(see \eqref{eq: computation I} and
 \eqref{eq: computation II} for similar computations).

\section{Appendix: transport equation theory needed to construct the follower}

During the proof of Theorem 5.2 we introduce a ``follower'', in
$\overline{\sR}^w$, of a regular flow, by convecting a suitable element
of $\overline{\sR}^w$ using the velocity field of the flow. Here we present
the theory of transport equations needed to justify this construction.

Let us consider a regular flow (see the above definition).
As $\bigcup_{t\in [0,\tm)}\Omega(t)$ is bounded, we can suppose that,
for some $R>0$, 
$\bigcup_{t\in [0,\tm)}\Omega(t)\subset (0,P)\times (0,R)$ 
and the divergence-free velocity 
$u\in L^\infty((0,\tm),H^1_{per}((0,P)\times(0,\infty)))$
vanishes for $x_2>R$. 
We extend $u$ 
to all of $\RR \times \RR^2$ by setting $u(t,x_1,x_2)=0$ for $t\not\in[0,\tm)$,
$u(t,x_1,x_2)=(u_1(t,x_1,-x_2),-u_2(t,x_1,-x_2))$ for $x_2<0$
and by $P$-periodicity in $x_1$.
We use the notation $u=(u_1,u_2)$ and $u(t)=u(t,\cdot)$.
As, for almost all $t$, the trace of $u_2(t)$ 
on the set $x_2=0$ is trivial (see \eqref{bottom bc}), 
$u$ is now well defined in
$L^\infty(\RR,H^1_{per}(\RR^2))$  and
still divergence free.

\subsubsection*{Existence}

Consider initial data $\chi(0) \in L^2 (\Omega(0))\subset
L^2((0,P)\times (0,\infty))$ and
extend it periodically in $x_1$ so that we can see it in
$L^2_{per}(\RR\times (0,\infty))\subset L^2_{per}(\RR^2)$
(and $\chi(0)$ vanishes when $x_2<0$).
Mollify $\chi(0)$ in $x$ to get $\chi_\varepsilon(0)$ and
mollify $u$ in $x$ and $t$ to get $u_{\varepsilon,\tau}(t)$ bounded
in $H^1_{per}(\RR^2)$. This can be done in such a way
that the second component of
$u_{\varepsilon,\tau}(t)$ vanishes on $x_2=0$.
Since, for fixed $\epsilon$ and $\tau$,
$u_{\varepsilon,\tau}\in L^\infty(\RR \times \RR^2)$,
the solution of
$$\partial_t \chi + \div(\chi u_{\varepsilon,\tau})=0
~ \text{ in }~ [0,\infty)\times \RR^2$$
with initial data $\chi_\varepsilon(0)$ exists for all positive
time by using the flow
of $u_{\varepsilon,\tau}$; denote it $\chi_{\varepsilon,\tau}(t)
\in L^2_{per}(\RR^2)$.
Notice that $u_{\varepsilon,\tau}(t)$ is still divergence-free
and
therefore the flow is rearrangement-preserving, hence
$$\| \chi_{\varepsilon,\tau}(t) \|_2=
\| \chi_{\varepsilon,\tau}(t) \|
_{L^2((0,P)\times\RR)} =
\| \chi_{\varepsilon}(0) \|_2
\leq \| \chi(0)\|_2 .
$$

Then, for any $1<s<2$, we have
\begin{multline*}
\|\chi_{\varepsilon,\tau}(t) u_{\varepsilon,\tau}(t)\|_s
=\|\chi_{\varepsilon,\tau}(t) 
u_{\epsilon,\tau}(t)\|_{L^s((0,P)\times\RR)}
\leq\|\chi_{\varepsilon,\tau}(t)\|_2\|u_{\varepsilon,\tau}(t)\|_{2s/(2-s)}
\\
\leq \|\chi(0)\|_2\|u(t)\|_{H^1},
\end{multline*}
so we have $\chi_{\varepsilon,\tau}(t) u_{\varepsilon,\tau}(t)$ bounded in
$L^s$ and thus
$\div(\chi_{\varepsilon,\tau}(t) u_{\varepsilon,\tau})$ bounded in 
$W^{-1,s}$. Hence, as in Lemma 10 in \cite{grbarma}, for
$0\leq t_1<t_2$,
\[
\| \chi_{\varepsilon,\tau}(t_2) - \chi_{\varepsilon,\tau}(t_1) \|_{-1,s}
\leq M \| \chi(0) \|_2 |t_2-t_1|
\]
where $M$ is a bound on $\| u(t) \|_{H^1}$ for almost all
 $t\in[t_1,t_2]$.

Let $1/r+1/s=1$ (so $2<r<\infty$).
Then 
$W^{1,r}((0,P)\times(-2R,2R)) \hookrightarrow L^2((0,P)\times (-2R,2R))$ 
compactly and, taking the adjoints,
$L^2 \hookrightarrow W^{-1,s}$ compactly.
Since the $\chi_{\varepsilon,\tau}(t)$ all lie in a ball in 
$L^2((0,P)\times(-2R,2R))$ (for $\epsilon,\tau$ small enough)
and hence
lie in a strongly compact set in $W^{-1,s}$, we can apply 
the Arzel\`{a}-Ascoli theorem to
let $\varepsilon,\tau \to 0$ (along any particular sequences) 
and obtain a sequence converging in
$L^\infty((0,\tm),W^{-1,s}_{per}(\RR\times(-2R,2R)))$ 
and weakly in $L^2$ on any bounded open subset of
$(0,\tm)\times \RR^2$ 
to a limit 
$$\chi\in C([0,\tm),W^{-1,s}_{per}(\RR^2))
\cap L^2_{loc}((0,\tm)\times \RR^2)
\cap L^\infty((0,\tm),L^2_{per}(\RR^2)),$$
where $L^2_{per}(\RR^2)$ is endowed with the norm
of $L^2((0,P)\times \RR)$. Moreover $\chi$
solves the linear transport equation on $(0,\tm)\times \RR^2$
with initial condition $\chi(0)$,
$\chi(t)$ is also weakly
continuous in $L^2$ with respect to $t\in[0,\tm)$,
$\chi(t)$ vanishes for $x_2<0$ and for $x_2>R$
and $\chi(t)\geq 0$ if $\chi(0)\geq 0$
(because of the way $\chi(\cdot)$ has been obtained as a limit;
remember that $\chi(0)$ vanishes for $x_2\not\in[0,R]$,
and since $u(t)$ vanishes for $x_2>R$ and the second component
of $u(t)$ is odd in $x_2$ it follows that the trajectories of the
approximating flows do not cross the lines $x_2=0$ and $x_2=R+\varepsilon$). 

\subsubsection*{Rearrangement and uniqueness}
Let $t\rightarrow \chi(t) \in L^2_{per}(\RR\times (0,\infty))$
be such that 
\begin{multline*}
\chi\in C([0,\tm),W^{-1,s}_{per}(\RR\times(0,\infty)))
\cap L^2_{loc}((0,\tm)\times \RR\times (0,\infty))
\\ \cap L^\infty_{loc}((0,\tm),L^2_{per}(\RR\times(0,\infty))),
\end{multline*}
the support of $\chi$ is uniformly bounded in the $x_2$ direction
and $\chi$ satisfies the linear transport
equation  on $(0,\tm)\times \RR\times (0,\infty)$, that is,
\begin{equation}\label{eq: lte}
\int_{(0,\tm)\times\RR\times(0,\infty)}
(\partial_t \varphi+\nabla \varphi\cdot u)\chi\, dtdx=0
\end{equation}
for all $\varphi\in \dsr((0,\tm)\times \RR\times (0,\infty))$.
Here  $\chi$ is not necessarily restricted
to be the solution obtained just above and,
provided that
$\chi \in L^2_{loc}((0,\tm)\times \RR\times (0,\infty))
\cap L^\infty_{loc}((0,\tm),L^2_{per}(\RR\times(0,\infty)))$,
the hypothesis
$\chi\in C([0,\tm),W^{-1,s}_{per}(\RR\times(0,\infty)))$ above
is equivalent in this context to the requirement that
$t\rightarrow \chi(t)\in L^2_{per}(\RR\times (0,\infty))$ 
is continuous in $t\geq 0$ with respect to the weak topology on
$L^2_{per}(\RR\times (0,\infty))$.

Let us check that \eqref{eq: lte} still holds for all
$\varphi\in \dsr((0,\tm)\times \RR^2)$,  so that
$\chi$ is also a solution to 
the linear transport equation  on 
$(0,\tm)\times \RR^2$ (where $\chi$ vanishes
if $x_2<0$). Given such a $\varphi$, 
we introduce $f\in C^\infty(\RR)$ such that $f(x_2)=0$
for $x_2\leq 0$, $f(x_2)=1$ for $x_2\geq 1$ and $f$
is increasing. We set $f_\delta(x_2)=f(x_2/\delta)$ and
observe that
$$\int_{(0,\tm)\times\RR\times(0,\infty)}
(f_\delta(x_2)\partial_t \varphi +f_\delta(x_2)\nabla \varphi\cdot u
+f_\delta'(x_2)\varphi u_2)\chi\, dtdx=0,$$
where $u=(u_1,u_2)$. As
$\chi\in L^\infty((0,\tm),L^2_{per}(\RR\times(0,\infty)))$, we get
$$\int_{(0,\tm)\times\RR\times(0,\infty)}
(f_\delta(x_2)\partial_t \varphi
+f_\delta(x_2)\nabla \varphi\cdot u) \chi\, dt dx
\to \int_{(0,\tm)\times \RR^2}(\partial_t \varphi 
+\nabla \varphi\cdot u) \chi\, dt dx
$$
as $\delta\to 0$, by Lebesgue's theorem.
Moreover, if $\varphi$ is supported in $(0,T)\times (-A,A)^2$ with
$0<T<\tm$, then
\begin{eqnarray*}
&&\left| 
\int_{(0,\tm)\times\RR\times(0,\infty)}
f_\delta'(x_2)\varphi u_2 \chi\, dtdx\right| 
\\&\leq &\delta^{-1}\text{const}\,
\|\varphi \chi \|_{L^2((0,T)\times (-A,A)\times (0,\delta))}
\| u_2 \|_{L^\infty((0,T),L^2((-A,A)\times (0,\delta)))} \\
&\stackrel{\text{Poincar\'e}}{\leq} &
\text{const}\,
\|\varphi \chi \|_{L^2((0,T)\times (-A,A)\times (0,\delta))}
\|\nabla u_2 \|_{L^\infty((0,T),L^2((-A,A)\times (0,\delta)))} \\
& \to& 0
\end{eqnarray*}
as $\delta \to 0$, because
$\chi\in  L^2_{loc}((0,\tm)\times \RR\times (0,\infty))$ 
 (Poincar\'e's inequality
is available thanks to the fact that the trace of $u_2(t)$
on $x_2=0$ vanishes for almost all $t$; see e.g. \cite{AdFo}, 
sect. 6.26 in the 1st edition or 6.30 in 
the 2nd).
Thus \eqref{eq: lte} holds for the more general $\varphi$ as desired.

Now that we know that
$$
\chi\in C([0,\tm),W^{-1,s}_{per}(\RR^2))
\cap L^2_{loc}((0,\tm)\times \RR^2)
\cap L^\infty_{loc}((0,\tm),L^2_{per}(\RR^2))
$$
is a solution to the linear transport
equation on $(0,\tm)\times \RR^2$, 
we mollify in $x$ to get 
$\chi_\varepsilon\in C([0,\tm),L^\infty_{per}(\RR^2))$. 
We also assume that $\chi$ vanishes if $x_2\not \in [0,R]$.

Choose any $T\in(0,\tm)$.
Then, for bounded $g \in C^1(\RR)$, by Bouchut \cite{Bo},
proof of Thm 3.2(ii) (especially Lemma 3.1(ii) applied
to eq. (3.23)), we have
$$\partial_t g(\chi_\varepsilon) + \div(g(\chi_\varepsilon) u)
= r_\varepsilon \to 0 \mbox{ in }
L^1((0,T),L^1_{loc}(\RR^2)) \mbox{ as } \varepsilon \to 0.$$
Integrating against a smooth test function of 
the form $h(t)f(x)$ we have
\begin{multline}
\left|\int_{\RR^3} h^\prime f g(\chi_\varepsilon)dtdx +
\int_{\RR^3} h \nabla f \cdot u g(\chi_\varepsilon)dtdx\right| =
\left|\int_{\RR^3} h f r_\varepsilon dt dx\right|
\\
\leq
\| r_\varepsilon \|_{L^1((0,T)\times(-P,2P)\times (-2R,2R) )}
\label{approxeq}
\end{multline}

provided 
$\sup_{t\in\RR}|h(t)|\leq 1$,  $\sup_{x\in\RR^2}| f(x)|\leq 1$,
$h$ is compactly supported in $(0,T)$ and $f$ is
compactly supported in $(-P,2P)\times (-2R,2R)$.

Choose $f=f_\delta\in \dsr(\RR^2)$
of the form $f_\delta(x_1,x_2)=f_1(x_1)f_2(x_2)$ where $f_1$
vanishes outside $[0,P+\delta]$
and is identically equal to $1$ on $[\delta,P]$,
while $f_2$ is compactly supported in 
$(-R-\delta,R+\delta)$
and is identically equal
to $1$ on $[-R, R]$.
We assume
$0<\delta<\min\{P/2,R\}$. 
By approximations, the class of allowed
$f_1$ can be enlarged to continuous functions that are piecewise
$C^1$, and therefore we can choose $f_1$ such that
$f_1(x_1)=x_1/\delta$ on $[0,\delta]$ and 
$f_1(x_1)=1-(x_1-P)/\delta$ on $[P,P+\delta]$. 
Then
\[
\int_{\RR^2} g(\chi_\varepsilon) \nabla f_\delta \cdot u \, dx
= \int_{\RR^2} g(\chi_\varepsilon) f_1^\prime(x_1) f_2(x_2) u_1
\, dx ,\]
because $u=(u_1,u_2)$ vanishes if $x_2\not \in [-R,R]$
and thus $f_2'(x_2)u_2$ vanishes almost everywhere on $\RR^2$,
where  $t$ is fixed in a set of full measure
in $(0,T)$.
The contributions to the integral of the regions
$[0,\delta]\times [-2R,2R]$ and
$[P,P+\delta] \times [-2R,2R]$ 
are equal and opposite (because $\chi_\varepsilon$ and
$u$ are  $P$-periodic
in $x_1$, and $f'(x_1)=\pm 1/\delta$ there), while
$g(\chi_\varepsilon) f_1^\prime(x_1) f_2(x_2) u_1$
vanishes everywhere else.
 Hence
\begin{equation}
\int_{\RR^2} g(\chi_\varepsilon(t,x))
\nabla f_\delta(x) \cdot u(t,x) dx
= 0.
\label{limitzero}
\end{equation}

For $0< t_1<t_2< T$,
now take $h=h_\delta$ in \eqref{approxeq} to be any test function on
$(0,T)$ with
$0 \leq h_\delta \leq 1$, vanishing outside
$(t_1,t_2)$, equal to $1$ on $[t_1+\delta,t_2-\delta]$, with
$0\leq h_\delta^\prime  \leq 2/\delta$ on $(t_1,t_1+\delta)$
and $0\leq - h_\delta^\prime  \leq 2/\delta$ on $(t_2-\delta,t_2)$
($0<\delta<(t_2-t_1)/2$).
Applying \eqref{limitzero} and letting $\delta \to 0$, we obtain
\begin{eqnarray*}
&& \left| \int_{(0,P)\times (-R,R)} g(\chi_\varepsilon(t_2))dx  -  
\int_{(0,P)\times (-R,R)} g(\chi_\varepsilon(t_1))dx\right| \\
&&\leq \|  r_\varepsilon \|_{L^1((0,T) \times (-P,2P)\times (-2R,2R) )} 
\end{eqnarray*}
because
$g(\chi_\varepsilon)\in C([0,\tm),L^\infty_{per}(\RR^2))$. 
Letting $\varepsilon \to 0$ yields
\[
\int_{(0,P)\times (-R,R)}
 g(\chi(t_2))dx = \int_{(0,P)\times(-R,R)} g(\chi(t_1))dx
\]
and we deduce that $\chi(t_2)$ is a rearrangement of $\chi(t_1)$ 
in $L^2((0,P)\times (-R,R))$. As a consequence
$\chi(t_2)$ is a rearrangement of $\chi(t_1)$ 
in $L^2((0,P)\times (0,R))$
and hence
$\| \chi(t,\cdot) \|_{L^2((0,P)\times (0,\infty))}$ 
is constant in time.
As $T\in(0,\tm)$ is arbitrary,
this proves any solution 
\begin{multline*}
\chi\in C([0,\tm),W^{-1,s}_{per}(\RR\times(0,\infty)))
\cap L^2_{loc}((0,\tm)\times \RR\times (0,\infty))
\\ \cap L^\infty_{loc}((0,\tm),L^2_{per}(\RR\times(0,\infty)))
\end{multline*}
of the linear transport equation  on $(0,\tm)\times \RR
\times (0,\infty)$
such that
$\chi$ vanishes for all $x_2\not\in (0,R)$ 
is strongly continuous with respect to
$L^2_{per}(\RR\times(0,\infty))$
(because it is weakly continuous and the $L^2$-norm is preserved).
In addition $\chi(t)$ is a rearrangement of $\chi(0)$
for all $t\in(0,\tm)$ and therefore  if $\chi(0)=0$
then $\chi(t)=0$ for all $t\in(0,\tm)$.
If $\chi(0)$
is not necessarily trivial, this implies by linearity that
$t\rightarrow \chi(t)$ is unique given $\chi(0)$
(more precisely, unique in this class). 

Let $\Omega(t)$ for $t\in(0,\tm)$ and $\widetilde \chi$ be as
in the definition of a regular flow in the previous section, and
assume moreover
that $\chi(0)$ vanishes outside
$\Omega(0)$. Then $\chi^2/(1+\chi^2)\in[0,1)$ is a solution to
the linear transport equation
on $(0,\tm)\times \RR^2$ (see Thm 3.2(ii) in \cite{Bo})
and so is $\widetilde \chi-\chi^2/(1+\chi^2)$ 
(by linearity).
As
$\widetilde \chi(0)-\chi(0)^2/(1+\chi(0)^2)\geq 0$ almost
everywhere, we get 
$\widetilde \chi(t)-\chi(t)^2/(1+\chi(t)^2)\geq 0$ for all $t\in[0,\tm)$
and thus $\chi(t)$ is supported by $\Omega(t)$ for all $t\in[0,\tm)$.

\end{document}